\theoremstyle{theorem}
\newtheorem{theorem}{Theorem}
\newtheorem{proposition}[theorem]{Proposition}
\newtheorem{lemma}[theorem]{Lemma}
\newtheorem{corollary}[theorem]{Corollary}
\theoremstyle{definition}
\newtheorem{definition}[theorem]{Definition}
\newtheorem{remark}[theorem]{Remark}
\newtheorem{example}[theorem]{Example}
\theoremstyle{plain}
\newcommand{\Z}{\mathbb Z} 
\newcommand{\R}{\mathbb R} 
\newcommand{\C}{\mathbb C}
\DeclareMathOperator{\Graph}{Graph}
\DeclareMathOperator{\supp}{supp}
\DeclareMathOperator{\PSS}{PSS}
\DeclareMathOperator{\id}{id}
\DeclareMathOperator{\crit}{Crit}
\DeclareMathOperator{\image}{Image}
\DeclareMathOperator{\Ham}{Ham}
\DeclareMathOperator{\Symp}{Symp}
\title{Invariants of Lagrangian cobordisms via spectral numbers}
\author{Mads R. Bisgaard}
\address{Department of Mathematics, ETH Z{\"u}rich, R{\"a}mistrasse 101\\
8092 Z{\"u}rich,
Switzerland\\
\emph{\href{mailto:mads.bisgaard@math.ethz.ch}{mads.bisgaard@math.ethz.ch}}}
\begin{document}
\maketitle
\begin{abstract}
We extend parts of the Lagrangian spectral invariants package recently developed by Leclercq and Zapolsky to the theory of Lagrangian cobordism developed by Biran and Cornea. This yields a non-degenerate Lagrangian "spectral metric" which bounds the Lagrangian "cobordism metric" (recently introduced by Cornea and Shelukhin) from below. It also yields a new numerical Lagrangian cobordism invariant as well as new ways of computing certain asymptotic Lagrangian spectral invariants explicitly.
\end{abstract}

\section{Introduction}
One approach to studying Lagrangian submanifolds of a symplectic manifold $(M,\omega)$ which has attracted a lot of attention lately is by studying their Lagrangian cobordisms in $(\R^2 \times M,\omega_{\R^2}\oplus \omega)$ (see precise definitions in Section \ref{secLagCob}). Biran and Cornea showed in \cite{BiranCornea13} and \cite{BiranCornea14} that suitable Lagrangian cobordisms preserve symplectic invariants. Considering "Lagrangians up to Lagrangian cobordism" thus seems like a very natural notion for studying the \emph{symplectic} topology of Lagrangian submanifolds. Moreover, Cornea and Shelukhin \cite{CorneaShelukhin15} recently discovered the existence of a remarkable "cobordism metric" $d_c$ on suitable subspaces of the space of Lagrangians in $(M,\omega)$ (see precise definitions in Section \ref{secLagmet}). This metric can be viewed as a generalization of the Lagrangian version of the well-known Hofer metric introduced by Chekanov \cite{Chekanov00}. Motivated by these discoveries we produce a Lagrangian cobordism invariant by applying the Lagrangian spectral invariant package recently developed by Leclercq and Zapolsky in \cite{LeclercqZapolsky15}. We also show that spectral numbers provide functions on subsets of the space of Lagrangian submanifolds in $(M,\omega)$ to $\R$ which are Lipschitz continuous with respect to $d_c$ and use this to define a non-degenerate "spectral metric" which bounds $d_c$ from below.


\subsection{Setting and notation}
\label{setting}
Throughout the paper we consider a connected symplectic manifold $(M^{2n},\omega)$ which is either closed or open and convex at infinity \cite{GromovEliashberg91}. We also consider the associated symplectic manifold $(\tilde{M},\tilde{\omega})$ defined by $\tilde{M}:=\R^2(x,y) \times M$ and $\tilde{\omega}:=\omega_{\R^2}\oplus \omega$, where $\omega_{\R^2}:=dx\wedge dy$. Unless otherwise stated, any Lagrangian submanifold $L^n\subset (M,\omega)$ will be assumed closed, connected and \emph{monotone}. By this we mean that there is a positive constant $\tau_L>0$ satisfying 
\[
\omega|_{\pi_2(M,L)}=\tau_L \cdot  \mu|_{\pi_2(M,L)},
\] 
where $\omega:\pi_2(M,L) \to \R$ denotes integration of $\omega$ and $\mu: \pi_2(M,L)\to \Z$ denotes the Maslov index. We will also assume that the minimal Maslov number $N_L:=\min \{\mu(\alpha)>0\ |\ \alpha \in \pi_2(M,L) \}$ associated to $L$ satisfies $N_L \geq 2$. If $\mu|_{\pi_2(M,L)}\equiv 0$ we set $N_L=\infty$ (this is a special case of the \emph{weakly exact} setting - see Section \ref{adapcob}). 

In the monotone setting both Floer homology $HF_*(L)$ and quantum homology $QH_*(L)$ of $L$ with $\Lambda$-coefficients are well-defined, where $\Lambda:=\Z_2[t,t^{-1}]$ \cite{BiranCornea07}, \cite{BiranCornea09}, \cite{LeclercqZapolsky15}, \cite{Zapolsky15}.\footnote{\cite{BiranCornea07} and \cite{BiranCornea09} use language/notation which is slightly different from \cite{LeclercqZapolsky15} and \cite{Zapolsky15}. To avoid confusion we therefore point out that $QH(L)$ (respectively $HF(L)$) with $\Lambda$-coefficients in \cite{BiranCornea07} and \cite{BiranCornea09} corresponds to $QH(L)$ (respectively $HF(L)$) of a suitable quotient complex with $\Z_2$-coefficients in \cite{LeclercqZapolsky15} and \cite{Zapolsky15} (see Section 2.6 in \cite{LeclercqZapolsky15} for details).} Here we will only work with $\Lambda$-coefficients and therefore omit them from the notation.

Given $\tau>0$ we denote by $\mathcal{L}_{\tau}=\mathcal{L}_{\tau}(M,\omega)$ the space of all Lagrangian submanifolds $L\subset (M,\omega)$ as above, satisfying the additional condition that $\tau_L=\tau$ (i.e. all Lagrangians are \emph{uniformly monotone}). We also denote by $\mathcal{L}^*_{\tau}\subset \mathcal{L}_{\tau}$ the subspace consisting of Lagrangians $L$ for which $QH_*(L)\neq 0$.

\section{Main results}
\label{MainResults}

\subsection{The Lagrangian cobordism metric structure}
\label{secLagmet}
Given a Lagrangian cobordism $V\subset (\tilde{M},\tilde{\omega})$ connecting two Lagrangians $L,L'\in \mathcal{L}_{\tau}$ (see Section \ref{secLagCob} for precise definitions) it is natural to consider the set $\pi(V)\subset \R^2$, where $\pi:\tilde{M}=\R^2 \times M \to \R^2$ denotes the projection. The insight that $\pi(V)$ contains valuable information originally arose during Biran and Cornea's extensive study of Lagrangian cobordism \cite{BiranCornea13}, \cite{BiranCornea14}. The idea was made quantitative in \cite{CorneaShelukhin15} where Cornea and Shelukhin established the existence of a remarkable natural \emph{cobordism metric} on the space $\mathcal{L}_{\tau}$.\footnote{In fact Cornea and Shelukhin showed that there is a cobordism metric in several different settings.} We say that the Lagrangian cobordism $V$ is \emph{elementary} if it is connected and monotone with $N_V\geq 2$ when viewed as a Lagrangian submanifold of $(\tilde{M},\tilde{\omega})$ (see also Section \ref{secLagCob}).
\begin{definition}[\cite{CorneaShelukhin15}]
	Given $L,L'\subset \mathcal{L}_{\tau}$ as well as an elementary Lagrangian cobordism $V:L' \rightsquigarrow L$ the \emph{outline of $V$}, $ou(V)$, is by definition the closed subset of $\R^2$ given as the complement of the union of unbounded components of $\R^2 \backslash \pi(V)$. The \emph{shadow of $V$} is defined by
	\[
	\mathcal{S}(V):=\text{Area}(ou(V)).
	\]
\end{definition}
The main result of this paper shows that $\mathcal{S}(V)$ provides a natural upper bound on the difference in spectral numbers coming from the bounding Lagrangians. The theory of spectral numbers for Lagrangian submanifolds has been developed in various settings by various authors, starting with Viterbo \cite{Viterbo92} and Oh \cite{Oh97}, \cite{Oh99}. Here we will use the version for monotone Lagrangians recently developed by Leclercq and Zapolsky \cite{LeclercqZapolsky15}. Associated to a given $L\in \mathcal{L}_{\tau}$ they defined a \emph{spectral invariant} function
\[
l_L:QH_*(L)\times \widetilde{\Ham}(M,\omega)\to \R \cup \{-\infty\}
\] 
satisfying $l(\alpha,\phi)=-\infty$ if and only if $\alpha =0\in QH_*(L)$ (see Section \ref{SecLagInv} for preliminaries on Lagrangian spectral invariants).
To state our main result we recall that, if $L,L'\in \mathcal{L}_{\tau}$ and $V:L' \rightsquigarrow L$ is an elementary Lagrangian cobordism then, by Theorem 2.2.2 in \cite{BiranCornea13}, $V$ induces a \emph{ring isomorphism} $\Phi_V:QH_*(L)\stackrel{\cong}{\to} QH_*(L')$. 
\begin{theorem}
	\label{lemcob1}
	Let $L,L'\in \mathcal{L}_{\tau}$ and let $V:L' \rightsquigarrow L$ be an elementary Lagrangian cobordism. Then
	\[
	|l_L(\alpha ,\phi)-l_{L'}(\Phi_V(\alpha),\phi)|\leq \mathcal{S}(V)
	\]
	for all $\alpha \in QH_*(L)\backslash \{0\}$ and all $\phi \in \widetilde{\Ham}(M,\omega)$.
\end{theorem}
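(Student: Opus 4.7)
The strategy is to lift the comparison to $(\tilde M, \tilde\omega)$ and to exploit that $V$, being elementary, is itself monotone with $N_V \geq 2$ and hence carries a Leclercq-Zapolsky Lagrangian spectral invariant $l_V$ of its own. The aim is to produce a class $\tilde\alpha \in QH_*(V)$ that restricts to $\alpha$ at the $L$-end and to $\Phi_V(\alpha)$ at the $L'$-end, and then to squeeze $l_V(\tilde\alpha, \tilde\phi)$ between $l_L(\alpha, \phi)$ and $l_{L'}(\Phi_V(\alpha), \phi)$ using a suitably profiled lift of $\phi$ to $\tilde M$, with error controlled by $\mathcal{S}(V)$.

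Concretely, fix $\epsilon > 0$ and let $H_t$ generate $\phi$. Choose a compactly supported cutoff $\rho\colon \R^2 \to [0,1]$ equal to $1$ on a neighborhood of $ou(V)$, and a Hamiltonian $K_t\colon \R^2 \to \R$ compactly supported in such a neighborhood with $\int_0^1 (\max K_t - \min K_t)\,dt \leq \mathcal{S}(V) + \epsilon$; the existence of such a $K_t$, whose $\R^2$-flow transports the projected $L'$-end onto the projected $L$-end along the outline, is essentially the construction underlying the Cornea-Shelukhin cobordism metric $d_c$. Put
\[
\tilde H_t(x,y,p) := \rho(x,y)\,H_t(p) + K_t(x,y).
\]
Outside the support of $\rho$ and $K_t$, $\tilde H_t$ vanishes, so a maximum principle in the $\R^2$-direction keeps Floer trajectories for $\tilde H_t$ asymptotic to $V$ in a compact region; the chord data of $\tilde H_t$ on $V$ then splits into two pieces, one near each end, in bijection with the chord data of $H_t$ on $L$ and on $L'$ respectively.

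The estimate then has three steps. First, reinterpret the Biran-Cornea isomorphism $\Phi_V$ (\cite{BiranCornea13}, Theorem 2.2.2) as a composition of PSS-type restriction maps $QH_*(L) \to QH_*(V) \to QH_*(L')$, and use it to produce $\tilde\alpha$ as above. Second, apply the restriction principle for spectral invariants at each end to obtain
\[
l_V(\tilde\alpha, \tilde\phi) = l_L(\alpha, \phi) + c_L, \qquad l_V(\tilde\alpha, \tilde\phi) = l_{L'}(\Phi_V(\alpha), \phi) + c_{L'},
\]
where $c_L, c_{L'}$ are the action contributions of $K_t$ at the respective ends. Third, subtract: $|l_L(\alpha, \phi) - l_{L'}(\Phi_V(\alpha), \phi)| = |c_L - c_{L'}| \leq \int_0^1 (\max K_t - \min K_t)\,dt \leq \mathcal{S}(V) + \epsilon$, and let $\epsilon \to 0$.

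The hard part will be twofold. First, setting up $l_V$ on the non-compact $V$: one must restrict to an adapted class of profile Hamiltonians on $\tilde M$ as above, prove the required maximum principle in the $\R^2$-direction, and run the usual Leclercq-Zapolsky/PSS machinery while tracking energy and index contributions. Second — and more delicate — is identifying the Floer-theoretic restriction map at the $L'$-end with the Biran-Cornea isomorphism $\Phi_V$ (not merely with some isomorphism), and verifying compatibility with the action filtration so that the constants $c_L, c_{L'}$ are genuinely controlled by the oscillation of $K_t$. Everything else should be a careful but essentially routine implementation of Floer theory on cobordisms in the spirit of \cite{BiranCornea13}.
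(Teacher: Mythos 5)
Your overall strategy (lift to the cobordism, use planar profile Hamiltonians whose size is controlled by the shadow, and identify end data Floer-theoretically) is in the right spirit, but the central step of your argument contains a genuine gap: you claim that for a \emph{single} lifted Hamiltonian $\tilde H_t=\rho H_t+K_t$ one has simultaneously $l_V(\tilde\alpha,\tilde\phi)=l_L(\alpha,\phi)+c_L$ and $l_V(\tilde\alpha,\tilde\phi)=l_{L'}(\Phi_V(\alpha),\phi)+c_{L'}$, with $c_L,c_{L'}$ depending only on $K$. This cannot be true. As you yourself note, the chords of $\tilde H$ on $V$ split into two families, one over each end; the filtered Floer complex of $(\tilde H,V)$ therefore contains generators coming from \emph{both} ends, and the spectral number of $\tilde\alpha$ is a minimax over the whole complex, not something computed end-by-end. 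If both equalities held, then $l_L(\alpha,\phi)-l_{L'}(\Phi_V(\alpha),\phi)=c_{L'}-c_L$ would be independent of $\phi$, which is false: already in the $T^*S^1$ example in this paper the difference vanishes at $\phi=\mathrm{id}$ but is close to $2$ for suitable $\phi$, while the cobordism (hence $K$) is fixed. So the subtraction in your third step, which is where the bound $|c_L-c_{L'}|\le\operatorname{osc}(K)\le\mathcal{S}(V)+\epsilon$ enters, rests on an identity that fails; the shadow never actually gets coupled to the difference of end invariants in your scheme. (A secondary problem: a compactly supported planar $K_t$ cannot ``transport'' one cylindrical end onto the other, since both ends are non-compact horizontal rays; what the planar Hamiltonian can do is displace $\pi(V)$ from itself except at one chosen point.)

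The repair is to use \emph{two} different planar profiles rather than one: choose $h_+$ whose time-one flow displaces $\pi(V)$ from itself except for a single bottleneck point over the $L'$-end, and $h_-$ with a bottleneck over the $L$-end. For $\tilde H^\pm=H+h_\pm$ all chords (and, with split almost complex structures and automatic transversality, all relevant Floer and PSS solutions) localize in the bottleneck fiber, and one gets the two identities separately, $l_{(V,\partial V)}(\alpha,\tilde H^+)=l_{L'}(j'(\alpha),H)+c_+$ and $l_{(V,\partial V)}(\alpha,\tilde H^-)=l_{L}(j(\alpha),H)+c_-$, where $j,j'$ are the Biran--Cornea restriction isomorphisms with $\Phi_V=j'\circ j^{-1}$ (working with the relative homology $QH_*(V,\partial V)$ and the sign conditions on the profile at the ends needed for the cylindrical PSS map). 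The shadow then enters not through the constants $c_\pm$ (which can be made to differ by an arbitrarily small amount) but through the Hofer--Lipschitz/continuation estimate for the spectral invariants of $(V,\partial V)$: $|l_{(V,\partial V)}(\alpha,\tilde H^-)-l_{(V,\partial V)}(\alpha,\tilde H^+)|\le\max|h_--h_+|$, and the profiles can be built so that this oscillation is at most $\mathcal{S}(V)$ plus an arbitrarily small error. Your proposal is missing precisely this comparison step between two different lifts, which is the mechanism by which $\mathcal{S}(V)$ bounds the difference; the localization-at-a-bottleneck and the compatibility of PSS with $j,j'$ that you flag as ``the hard part'' are indeed the necessary technical inputs, but they must be deployed in this two-Hamiltonian form.
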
 
Example \ref{cobex1} below shows that the result is sharp in the sense that there exist cobordisms $V$ for which the statement becomes false if $\mathcal{S}(V)$ is replaced by a smaller number.
Cornea and Shelukhin further considered the following 
\begin{definition}[\cite{CorneaShelukhin15}]
	Define a function $d_c:\mathcal{L}_{\tau}\times \mathcal{L}_{\tau}\to [0,\infty]$ by
	\[
	d_c(L,L'):=\inf\{\mathcal{S}(V)\ |\ V:L' \rightsquigarrow L \}, \quad L,L'\in \mathcal{L}_{\tau}.
	\]
	Here the infimum runs over all elementary Lagrangian cobordisms $V:L' \rightsquigarrow L$.
\end{definition}
One of the main results in Cornea and Shelukhin's paper \cite{CorneaShelukhin15} is that $d_c$ in fact is a metric on $\mathcal{L}_{\tau}$. Of course $d_c(L,L')=\infty$ if and only if there do not exist any elementary Lagrangian cobordisms $V:L' \rightsquigarrow L$. Recall that, given $L\in \mathcal{L}_{\tau}$, the fundamental class $[L]\in QH_n(L)$ is the unity with respect to the ring structure on $QH_*(L)$. It is convenient to introduce the notation $l^+_L(\phi):=l_L([L],\phi)$ for $\phi \in \widetilde{\Ham}(M,\omega)$. Since $\Phi_V$ preserves the ring structure, Theorem \ref{lemcob1} implies 
\begin{corollary}
	\label{corcob1}
	For every pair of Lagrangian submanifolds $L,L'\in \mathcal{L}^*_{\tau}$ we have
	\[
	|l_L^{+}(\phi)-l_{L'}^{+}(\phi)|\leq d_c(L,L') \quad \forall \ \phi \in \widetilde{\Ham}(M,\omega).
	\]
\end{corollary}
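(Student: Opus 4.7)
The plan is to derive the corollary directly from Theorem \ref{lemcob1} by choosing $\alpha = [L]$ and using the ring-theoretic behavior of $\Phi_V$. Fix $L, L' \in \mathcal{L}^*_\tau$ and $\phi \in \widetilde{\Ham}(M,\omega)$. If there are no elementary cobordisms from $L'$ to $L$, then $d_c(L,L') = \infty$ by definition and the inequality is vacuous, so it suffices to consider an arbitrary elementary cobordism $V : L' \rightsquigarrow L$ and prove the estimate with $\mathcal{S}(V)$ on the right; the corollary then follows by taking the infimum over all such $V$.

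For any such $V$, the quoted result of Biran--Cornea provides a \emph{ring} isomorphism $\Phi_V : QH_*(L) \stackrel{\cong}{\to} QH_*(L')$. Since ring homomorphisms between unital rings send the unit to the unit (and $QH_*(L)$, $QH_*(L')$ are both nonzero by the assumption $L, L' \in \mathcal{L}^*_\tau$, so the units are nonzero), we have $\Phi_V([L]) = [L']$ and in particular $[L] \neq 0 \in QH_*(L)$. We may therefore apply Theorem \ref{lemcob1} with $\alpha = [L]$ to obtain
\[
\bigl| l_L([L],\phi) - l_{L'}([L'],\phi) \bigr| \leq \mathcal{S}(V),
\]
which is precisely $|l_L^+(\phi) - l_{L'}^+(\phi)| \leq \mathcal{S}(V)$ by the definition of $l^+$.

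Taking the infimum of the right-hand side over all elementary cobordisms $V : L' \rightsquigarrow L$ then gives
\[
|l_L^+(\phi) - l_{L'}^+(\phi)| \leq d_c(L,L'),
\]
as desired. There is essentially no obstacle: the only nontrivial input beyond Theorem \ref{lemcob1} is the observation that $\Phi_V$ being a ring isomorphism forces $\Phi_V([L]) = [L']$, together with the fact that the unit of a nonzero unital ring is nonzero (guaranteeing that we are in the regime $\alpha \neq 0$ where Theorem \ref{lemcob1} yields a finite bound).
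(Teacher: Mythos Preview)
Your proof is correct and follows exactly the approach indicated in the paper: apply Theorem~\ref{lemcob1} with $\alpha=[L]$, use that the ring isomorphism $\Phi_V$ sends the unit $[L]$ to the unit $[L']$, and then take the infimum over all elementary cobordisms $V$. The paper states this in one line (``Since $\Phi_V$ preserves the ring structure, Theorem~\ref{lemcob1} implies\ldots''), and you have simply spelled out the details.
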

\begin{remark}
Corollary \ref{corcob1} says that, for any fixed $\phi \in \widetilde{\Ham}(M,\omega)$, the function $(\mathcal{L}^*_{\tau},d_c)\to (\R ,|\cdot| )$ given by $L'\mapsto l^+_{L'}(\phi)$ is $1$-Lipschitz. Of course this statement is only interesting when $L'\mapsto l^+_{L'}(\phi)$ is restricted to an elementary Lagrangian cobordism class $\subset \mathcal{L}^*_{\tau}$.  
\end{remark}

The following definition and proposition were generously suggested to us by the anonymous referee whom we wholeheartedly thank!
\begin{definition}
\label{defref}
Define $d_s:\mathcal{L}^*_{\tau}\times \mathcal{L}^*_{\tau}\to [0,\infty]$ by 
\[
d_s(L,L'):= \sup \{ |l^+_L(\phi)-l^+_{L'}(\phi)|\ |\ \phi \in \widetilde{\Ham}(M,\omega)\}, \quad L,L'\in \mathcal{L}_{\tau}^*.
\]
\end{definition}
As the notation suggests $d_s$ is a (spectral) metric. It is clear that $d_s$ is symmetric and satisfies the triangle inequality. Hence, the only non-trivial property to check in order for $d_s$ to be a metric, is non-degeneracy.
\begin{proposition}
\label{propref}
$d_s:\mathcal{L}^*_{\tau}\times \mathcal{L}^*_{\tau}\to [0,\infty]$ is a non-degenerate metric.
\end{proposition}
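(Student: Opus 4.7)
Symmetry and the triangle inequality are immediate from Definition \ref{defref}, so only non-degeneracy requires work. My plan is to prove the contrapositive: given $L,L'\in \mathcal{L}^*_\tau$ with $L\neq L'$, I will produce $\phi\in \widetilde{\Ham}(M,\omega)$ with $l_L^+(\phi)\neq l_{L'}^+(\phi)$, forcing $d_s(L,L')>0$. Since $L$ and $L'$ are closed connected $n$-dimensional submanifolds, neither contains the other, so after possibly interchanging them I fix a point $p\in L\setminus L'$.

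Choose a Darboux neighborhood $U$ of $p$ small enough that $\overline{U}\cap L'=\emptyset$, and construct a smooth compactly supported Hamiltonian $H\colon M\to \R$ with $\supp H\subset U$, $H\geq 0$, and $H(p)=1$. Since $sH$ vanishes identically on $L'$ for every $s\geq 0$, Leclercq-Zapolsky's Lagrangian control property (applied with the constant function $0$) gives
\[
l_{L'}^+(\phi_{sH}^1)=0 \quad \text{for all } s\geq 0.
\]
The crux of the proof is then to establish, for all sufficiently small $s>0$, the strict positivity
\[
l_L^+(\phi_{sH}^1)=s>0,
\]
from which $d_s(L,L')\geq s>0$ follows at once.

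This positivity statement is the Lagrangian analogue of the classical fact that the Hamiltonian spectral invariant of a $C^2$-small function equals its maximum, and it is where I expect the real work to sit. I would derive it by tracing the fundamental class $[L]\in QH_n(L)$ through the Lagrangian Piunikhin-Salamon-Schwarz isomorphism $QH_*(L)\xrightarrow{\cong} HF_*(L,\phi_{sH}^1(L))$: for $s$ small enough that $sH$ is $C^2$-small, the filtered Floer complex is quasi-isomorphic to a Morse model on $L$ whose filtration reads off the values of $sH|_L$, and $[L]$ is represented at the unique maximum of $sH|_L$--namely the point $p$ with value $s$--after which spectrality pins $l_L^+(\phi_{sH}^1)$ down to exactly $s$. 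The main subtlety is that $H|_L$ is not Morse globally because it is flat on $L\setminus U$; this can be handled by adding an arbitrarily small auxiliary Morse perturbation of $H$ supported inside $U$, which affects neither the vanishing of the Hamiltonian on $L'$ nor the location and value of the maximum at $p$ when chosen generically.
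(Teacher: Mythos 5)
Your overall architecture matches the paper's: take $p\in L\setminus L'$, use a Hamiltonian that vanishes on $L'$ (so $l^+_{L'}=0$ by Lagrangian control) and is positive on $L$ near $p$, and conclude $d_s(L,L')>0$ from $l^+_L>0$. The gap is in the crux you yourself flag, namely $l^+_L(\phi_{sH}^1)=s$ for $C^2$-small $sH$. In the monotone setting the filtered Floer complex of a $C^2$-small Hamiltonian is \emph{not} quasi-isomorphic to a Morse model whose filtration reads off the values of $sH|_L$: the generators are capped chords, the filtration is shifted by disc areas in $\tau_L N_L\Z$, and holomorphic discs of Maslov index $\geq 2$ contribute to the differential no matter how small $s$ is (this is exactly why $QH_*(L)$ may differ from $H_*(L)$ and why one restricts to $\mathcal{L}^*_{\tau}$). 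The paper's Lemma \ref{lemref1} is precisely the substitute for your claimed reduction: spectrality, the $\Z$-grading, monotonicity and the continuity bound $|l^+_L(H)|\leq \max_M|H|<\tfrac{\tau_L N_L}{2}$ force the representing capped chord to have a Maslov-index-zero capping and Morse index $n$, yielding $l^+_L(H)=h(q)$ for \emph{some} index-$n$ critical point $q$ of $h$ --- not automatically the maximum value. Moreover that lemma requires $H$ to be of the graphical form $\varphi\, b^*h$ in a Weinstein neighbourhood of $L$, so that time-one chords from $L$ to itself are constants at points of $\crit(h)$; for your $H$ supported in a Darboux ball of $M$ around $p$ this identification of the action spectrum is not automatic, since short chords from $L$ to $L$ need not be constant.

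Two further concrete problems. First, your fix for the non-Morseness of $H|_L$ --- an arbitrarily small Morse perturbation of $H$ supported inside $U$ --- does not work: $H|_L$ remains identically zero on $L\setminus U$, hence non-Morse there, and any honest perturbation can create spurious index-$n$ critical points with values near $0$; this is fatal if (as in Lemma \ref{lemref1}) all you can conclude is that $l^+_L$ equals the value at \emph{some} index-$n$ point. The paper avoids this by perturbing with $\epsilon\, b^*f$ where $f$ is a Morse function on all of $L$ whose unique index-$n$ critical point is $q$ itself, obtaining $l^+_L(H^{\epsilon})=h(q)+\epsilon f(q)$ for small $\epsilon>0$ and then $l^+_L(H^{0})=h(q)>0$ by continuity as $\epsilon\to 0$, while the cutoff is arranged so that $H^{0}|_{L'}\equiv 0$. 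Second, note that only positivity is needed, not the full equality $l^+_L=\max sH$; if you replace your ``$C^2$-small implies Morse model'' step by the paper's action-window argument (values below $\tfrac{\tau_L N_L}{2}$ excluding nontrivial cappings), take $H$ of graphical bump form with a unique local maximum on $L$, and perturb globally as above, your argument goes through.
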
 
Note that this result shows that the estimate in Theorem \ref{lemcob1} is non-trivial whenever $L\neq L'$.
As an immediate consequence of Corollary \ref{corcob1} we obtain
\begin{corollary}
\label{corref}
For all $L,L'\in \mathcal{L}^*_{\tau}$ we have $d_s(L,L')\leq d_c(L,L')$.
\end{corollary}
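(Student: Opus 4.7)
The claim is indeed immediate from Corollary \ref{corcob1}, so the proof proposal is short.

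The plan is to fix $L, L' \in \mathcal{L}^*_{\tau}$ and separate two cases. If no elementary Lagrangian cobordism $V : L' \rightsquigarrow L$ exists then $d_c(L,L') = \infty$ by definition, and the inequality $d_s(L,L') \leq d_c(L,L')$ holds trivially (the right-hand side of the inequality in Definition \ref{defref} is automatically bounded by $\infty$).

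Otherwise, Corollary \ref{corcob1} supplies the single uniform estimate
\[
|l_L^{+}(\phi) - l_{L'}^{+}(\phi)| \leq d_c(L,L')
\]
valid for \emph{every} $\phi \in \widetilde{\Ham}(M,\omega)$, with the same (finite) number $d_c(L,L')$ on the right-hand side independent of $\phi$. Taking the supremum over $\phi \in \widetilde{\Ham}(M,\omega)$ on the left preserves the bound, yielding
\[
d_s(L,L') = \sup_{\phi} |l_L^{+}(\phi) - l_{L'}^{+}(\phi)| \leq d_c(L,L'),
\]
which is exactly the desired inequality.

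There is essentially no obstacle in this argument, since all the content sits in Corollary \ref{corcob1} (which in turn rests on Theorem \ref{lemcob1}): the crucial point is that the bound provided there is uniform in $\phi$, so passing from a pointwise-in-$\phi$ inequality to an inequality between the supremum $d_s(L,L')$ and $d_c(L,L')$ requires nothing more than the monotonicity of supremum.
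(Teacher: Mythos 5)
Your proposal is correct and matches the paper, which states Corollary \ref{corref} as an immediate consequence of Corollary \ref{corcob1}: the bound there is uniform in $\phi$, so taking the supremum in Definition \ref{defref} gives the inequality at once (your case split for $d_c(L,L')=\infty$ is fine, though not strictly needed since the bound is vacuously true in that case).
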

This result together with Proposition \ref{propref} gives a new proof of the fact that the restriction of $d_c$ to $\mathcal{L}^{*}_{\tau}$ is non-degenerate. Fixing $L\in \mathcal{L}^*_{\tau}$ one often considers the subset $\mathcal{H}(L):=\{\phi(L)\ |\ \phi \in \Ham(M,\omega)\}\subset \mathcal{L}^*_{\tau}$ equipped with the Hofer metric $d_H$ \cite{Chekanov00}. It follows from Corollary \ref{corref} and the Lagrangian suspension construction \cite{CorneaShelukhin15} that
\begin{equation}
\label{cobref1}
d_s(L',L'')\leq d_c(L',L'')\leq d_H(L',L'') \quad \forall \ L',L''\in \mathcal{H}(L).
\end{equation}
Denote by $\Symp_{c|M}(\tilde{M}, \tilde{\omega} )\leq \Symp(\tilde{M},\tilde{\omega} )$ the subgroup of symplectomorphisms $\psi$ which are \emph{compactly supported relative to $M$} in the sense that there is a compact $K\subset \R^2$ such that $\psi|_{(\R^2 \backslash K)\times M}=(\id_{\R^2}\times \varphi)|_{(\R^2 \backslash K)\times M}$ for some $\varphi \in \Symp(M,\omega)$. Given $L,L'\in \mathcal{L}^*_{\tau}$, $\psi \in \Symp_{c|M}(\tilde{M},\tilde{\omega})$ and an elementary Lagrangian cobordism $V:L' \rightsquigarrow L$ we obtain a new cobordism $\psi(V):\varphi(L') \rightsquigarrow \varphi(L)$. It is easy to check that $d_s$ is $\Symp(M,\omega)$-invariant and therefore Corollary \ref{corref} gives a Lagrangian non-squeezing type inequality:
\begin{equation}
\label{cobref2}
d_s(L,L') \leq \mathcal{S}(\psi(V)) \quad \forall \ \psi \in \Symp_{c|M}(\tilde{M}, \tilde{\omega}).
\end{equation}
From this point of view it would be interesting to understand for which pairs $L',L''\in \mathcal{H}(L)$ it holds that $d_s(L',L'')=d_H(L',L'')$. For such pairs one can find Lagrangian suspensions which are (close to) "optimal" in the sense that they (almost) minimize shadow among all elementary Lagrangian cobordisms $L'' \rightsquigarrow L'$ (see also Example \ref{cobex1} below). Some investigations in this direction were already carried out in Remark 5.1 in \cite{CorneaShelukhin15}. Moreover, Corollary \ref{corcob1} above can be viewed as a generalization of a bound found in \cite{LeclercqZapolsky15} for the Hofer distance on the universal cover of $\mathcal{H}(L)$ for a fixed $L\in \mathcal{L}^*_{\tau}$. 

It is an open problem to understand the extend to which $d_c$ differs from $d_H$ in our setting: The main examples of elementary Lagrangian cobordisms are Lagrangian suspensions and their images under $\Symp_{c|M}(\tilde{M},\tilde{\omega})$-elements. On the other hand there are many explicit examples of non-monotone Lagrangian cobordisms which do not arise as Lagrangian suspensions \cite{BiranCornea13}, \cite{Chekanov97}, \cite{Haug15}. In \cite{Bisgaard17} we study how properties similar to (\ref{cobref2}) of such cobordisms are intimately linked to their topology.  
\begin{example}\footnote{We thank the anonymous referee for suggesting this example.}
\label{cobex1}
Consider $T^*S^1 = \R /\Z \times \R$ with coordinates $(q,p)\in \R/\Z\times \R$ and equipped with the symplectic structure $dp \wedge dq$. Denote by $L\subset T^*S^1$ the 0-section. Define an autonomous Hamiltonian $H\in C^{\infty}(T^*S^1)$ by $H(q,p)=(\sin(2\pi q)+1)$ and let $L':=\phi_H^1(L)\subset T^*S^1$. Choose a monotone function $\rho \in C^{\infty}(\R;[0,1])$ such that for some small $\epsilon >0$ we have $\rho =0$ on $(-\infty, \epsilon]$ and $\rho=1$ on $[1-\epsilon,\infty)$. Then the Lagrangian suspension construction \cite{Polterovich01} applied to the time-dependent Hamiltonian $\hat{H}_t(q,p):=\rho'(t)H(q,p)$ produces an exact Lagrangian cobordism $V:L' \rightsquigarrow L$ with shadow $\mathcal{S}(V)=2$. Moreover, since $H$ has Hofer norm $=2$ we conclude that $d_{H}(L,L')\leq 2$. We claim that $d_s(L,L')\geq 2$, which by (\ref{cobref1}) implies 
\[
d_s(L,L')= d_c(L,L')= d_H(L,L')=2.
\]
To see this fix a small $\epsilon >0$ and a corresponding $1> \! \! \! >\delta >0$ such that 
\[
\max\{ H(q,p) \ | \ (q,p)\in T^*S^1\backslash B_{3\delta}(\tfrac{1}{4})\}\geq 2-\frac{\epsilon}{3},
\] 
where $B_{3\delta}(\tfrac{1}{4})$ denotes the ball of radius $3\delta$ centered at $\tfrac{1}{4}\in L$. Choose  $\varphi_1 \in C^{\infty}(L;[0,1])$ such that 
\begin{align*}
	\varphi_1 \left\{
	\begin{array}{ll}
	=0, & \text{on}\ L\cap B_{2\delta}(\tfrac{1}{4}) \\ 
	=1, & \text{on}\ L\backslash B_{3\delta}(\tfrac{1}{4})
	\end{array}
	\right. \quad \& \quad
	\varphi'_1 \left\{
	\begin{array}{ll}
	\leq 0, & \text{on}\ [\frac{1}{4}-3\delta, \frac{1}{4}-2\delta ]  \\ 
	\geq 0, & \text{on}\ [\frac{1}{4}+2\delta , \frac{1}{4}+3\delta],
	\end{array}
	\right.
\end{align*} 
and define $H^1(q,p):=\varphi_1(q)H(q,p)$. Applying Lemma \ref{lemref1} below together with an easy approximation argument one sees that for all $s\in [0,1]$ we have $l^+_L(sH^1)=\max_{T^*S^1}(sH^1) \geq s(2-\tfrac{\epsilon}{3})$. Fix now $\varphi_2 \in C^{\infty}(T^*S^1;[0,1])$ such that $\varphi_2=1$ outside a very small neighborhood of $L'\backslash (B_{\delta}(\tfrac{1}{4})\cup B_{\delta}(\tfrac{3}{4}))$ and $\varphi_2=0$ on an even smaller neighborhood of $L'\backslash (B_{\delta}(\tfrac{1}{4}) \cup B_{\delta}(\tfrac{3}{4}))$. Define $H^2(q,p):=\varphi_2(q,p)H^1(q,p)$. By continuity and the Lagrangian control property from \cite{LeclercqZapolsky15} we have $|l^+_{L'}(sH^2)|\leq \tfrac{\epsilon}{3}$ for all $s\in [0,1]$ if $\delta$ is chosen small enough. For a $s_*\in (0,1)$ very close to $1$ (depending only on the set $\{\varphi_2 \neq 1\}$) the path $\{\phi_{s_*H^1}^t(L)=\phi_{s_*H^2}^t(L)\}_{t \in [0,1]}$ is contained in the set $\{ \varphi_2=1 \}$. In particular, for every $s\in [0,s_*]$, the Hamiltonian chords of the autonomous Hamiltonian $sH^2+(s_*-s)H^1=(s_*+s(\varphi_2-1))H^1$ connecting $L$ to itself coincide with those of the Hamiltonian $s_*H^1$, and are contained in the set $\{\varphi_2=1\}$. Hence, spectrality \cite{LeclercqZapolsky15} gives
\[
l^+_L(sH^2+(s_*-s)H^1)\in \mathcal{A}_{s_*H^1:L}(\crit(\mathcal{A}_{s_*H^1:L})) \quad \forall \ s\in [0,s_*].
\]
Since $\mathcal{A}_{s_*H^1:L}(\crit(\mathcal{A}_{s_*H^1:L}))\subset \R$ is nowhere dense and 
\[
[0,s_*]\ni s \mapsto l^+_L(sH^2+(s_*-s)H^1) \in \R
\] 
is continuous we conclude that $l^+_L(s_*H^2)=l^+_L(s_*H^1)\geq s_*(2-\tfrac{\epsilon}{3})\geq 2-\tfrac{2\epsilon}{3}$ if $s_*$ is sufficiently close to $1$. Hence,
\[
d_s(L,L')\geq |l^+_L(s_*H^2)-l^+_{L'}(s_*H^2)|\geq |l^+_L(s_*H^2)|-|l^+_{L'}(s_*H^2)| \geq 2-\epsilon.
\]
Now the claim follows by letting $\epsilon \to 0$.
\end{example}


\subsection{A Lagrangian cobordism invariant}
\label{Lagspecinv}
Fix $L\in \mathcal{L}^*_{\tau}$. A nice fact about $l_L^+$ is that it satisfies the triangle inequality
\[
l_{L}^+(\phi \psi)\leq l_{L}^+(\phi )+l_{L}^+(\psi)\quad \forall \ \phi, \psi \in \widetilde{\Ham}(M,\omega).
\]
This and the continuity property of $l_L^+$ allows one to consider the asymptotic spectral invariant $\sigma_L:\widetilde{\Ham}(M,\omega) \to \R$ given by
\[
\sigma_L(\phi):=\lim_{k\to \infty} \frac{l_L^+(\phi^k)}{k} , \quad \phi \in \widetilde{\Ham}(M,\omega).
\]
We want to point out that, just like $l_L$, $\sigma_L$ is known to satisfy a number of "nice" properties. In the case when $L$ is the 0-section of a cotangent bundle many of these are documented in \cite{ZapolskyMonznerVichery12}. For the monotone setting we are considering many analogous properties follow immediately from the properties of $l_L$ which are documented in \cite{LeclercqZapolsky15}.

Our next result shows that $\sigma_L$ can be considered as an object associated to $L$'s elementary Lagrangian cobordism class.
\begin{theorem}
\label{thmcob1}
Let $L\in \mathcal{L}^*_{\tau}$. Then the function $\sigma_L:\widetilde{\Ham}(M,\omega)\to \R$ is an elementary Lagrangian cobordism invariant of $L$. In other words, if $L'\in \mathcal{L}_{\tau}$ is in the same elementary Lagrangian cobordism class as $L$ then $\sigma_{L'}:\widetilde{\Ham}(M,\omega)\to \R$ is well-defined and $\sigma_L=\sigma_{L'}$.
\end{theorem}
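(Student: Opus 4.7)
The plan is to deduce Theorem \ref{thmcob1} from Corollary \ref{corcob1} (equivalently, Theorem \ref{lemcob1} specialized to the fundamental class) by the standard $k$-th power asymptotic averaging argument.

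First I would verify that $\sigma_{L'}$ is well-defined, i.e.\ that $L' \in \mathcal{L}^*_\tau$. Suppose first that $L$ and $L'$ are directly joined by a single elementary Lagrangian cobordism $V : L' \rightsquigarrow L$; the case of a general chain reduces to this by iterating the argument below. By the Biran-Cornea theorem cited above Theorem \ref{lemcob1}, $V$ induces a ring isomorphism $\Phi_V : QH_*(L) \stackrel{\cong}{\to} QH_*(L')$. Any ring isomorphism sends the unit to the unit, so $\Phi_V([L]) = [L']$, and in particular $[L']\neq 0$; this forces $QH_*(L')\neq 0$, so $L' \in \mathcal{L}^*_\tau$ and $\sigma_{L'}$ is defined.

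With both $\sigma_L$ and $\sigma_{L'}$ at hand, I would apply Corollary \ref{corcob1} to the pair $(L,L')$ to obtain, for every $\psi \in \widetilde{\Ham}(M,\omega)$,
\[
|l^+_L(\psi) - l^+_{L'}(\psi)| \leq \mathcal{S}(V),
\]
the crucial feature being that $\mathcal{S}(V)$ depends only on $V$ and not on $\psi$. Fixing $\phi \in \widetilde{\Ham}(M,\omega)$ and specializing to $\psi = \phi^k$, then dividing by $k$ and letting $k \to \infty$, yields
\[
|\sigma_L(\phi) - \sigma_{L'}(\phi)| \leq \lim_{k\to \infty} \frac{\mathcal{S}(V)}{k} = 0,
\]
so $\sigma_L(\phi) = \sigma_{L'}(\phi)$ for every $\phi$.

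The only bookkeeping step is the reduction from an elementary cobordism class to a single cobordism $V$, handled by induction on the length of a connecting chain. There is no genuine analytic obstacle beyond Theorem \ref{lemcob1} itself: the argument exploits only that the bound there is uniform in $\phi$ and that $\Phi_V$ preserves the unit, so that the difference $l^+_L - l^+_{L'}$ is uniformly bounded and becomes invisible after the $1/k$-rescaling used to define $\sigma_L$.
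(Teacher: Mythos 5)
Your argument is correct and coincides with the paper's own proof: well-definedness of $\sigma_{L'}$ follows from the isomorphism $\Phi_V$ forcing $QH_*(L')\neq 0$, and then the uniform bound of Theorem \ref{lemcob1}/Corollary \ref{corcob1} applied to $\phi^k$, divided by $k$, kills the difference in the limit. The only cosmetic difference is that the paper phrases the bound via $d_c(L,L')<\infty$ rather than $\mathcal{S}(V)$, and your chain-induction step is superfluous since being cobordant by a single elementary cobordism is already an equivalence relation (Remark in Section \ref{secLagCob}).
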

As mentioned in the introduction Biran and Cornea's Lagrangian cobordism theory \cite{BiranCornea13}, \cite{BiranCornea14} shows that it is desirable to be able to detect whether or not two given Lagrangians are in the same elementary Lagrangian cobordism class.
To our knowledge, $L\mapsto \sigma_L$ is one of very few numerical invariants known for Lagrangian cobordism. Naturally one would like to make use of the algebraic structures on $\widetilde{\Ham}(M,\omega)$ and properties of $\sigma_L$ to derive criteria for detecting the non-existence of elementary Lagrangian cobordisms. One example of how this can be done is the following result. Recall that $\Ham(M,\omega)$ is a normal subgroup of $\Symp(M,\omega)$. In particular $\Symp(M,\omega)$ acts on $\widetilde{\Ham}(M,\omega)$ by conjugation
\begin{equation*}
\begin{array}{c}
\Symp(M,\omega)\times \widetilde{\Ham}(M,\omega) \to \widetilde{\Ham}(M,\omega) \\
(\psi , \phi)  \mapsto \psi \phi \psi^{-1}
\end{array}
\end{equation*}
As a consequence of Theorem \ref{thmcob1} and the symplectic invariance property from \cite{LeclercqZapolsky15} we obtain
\begin{corollary}
\label{cobcor100}
Let $L\in \mathcal{L}^*_{\tau}$ and $\psi \in \Symp(M,\omega)$. If $L$ and $\psi(L)$ are in the same elementary Lagrangian cobordism class then $\sigma_L$ is invariant under conjugation by $\psi$. I.e.
\[
\sigma_L(\phi)=\sigma_L(\psi \phi \psi^{-1}) \quad \forall \ \phi \in \widetilde{\Ham}(M,\omega).
\] 
\end{corollary}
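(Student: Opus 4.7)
The plan is to derive the statement by combining two ingredients, both available at this point of the paper: Theorem \ref{thmcob1} and the symplectic invariance property of the Leclercq--Zapolsky spectral invariants from \cite{LeclercqZapolsky15}.

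First, I would invoke the symplectic invariance axiom for $l_L$: for any $\psi\in \Symp(M,\omega)$, there is an induced isomorphism $\psi_*: QH_*(L)\to QH_*(\psi(L))$ satisfying
\[
l_{\psi(L)}(\psi_*\alpha,\, \psi\phi\psi^{-1}) = l_L(\alpha,\phi)
\]
for every $\alpha\in QH_*(L)$ and every $\phi\in \widetilde{\Ham}(M,\omega)$. Since $\psi_*$ preserves the ring structure, it maps the unity $[L]$ to the unity $[\psi(L)]$, so specializing $\alpha=[L]$ gives $l^+_{\psi(L)}(\psi\phi\psi^{-1})=l^+_L(\phi)$. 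Replacing $\phi$ by $\phi^k$, dividing by $k$, and letting $k\to\infty$ yields the asymptotic analogue
\[
\sigma_{\psi(L)}(\psi\phi\psi^{-1}) = \sigma_L(\phi) \qquad \forall\, \phi\in \widetilde{\Ham}(M,\omega).
\]

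Next, since $\psi$ is a symplectomorphism, $QH_*(\psi(L))\cong QH_*(L)\neq 0$, so $\psi(L)\in \mathcal{L}^*_{\tau}$. Together with the standing hypothesis that $L$ and $\psi(L)$ lie in the same elementary Lagrangian cobordism class, Theorem \ref{thmcob1} implies $\sigma_{\psi(L)} = \sigma_L$ as functions on $\widetilde{\Ham}(M,\omega)$. Combining the two equalities gives
\[
\sigma_L(\psi\phi\psi^{-1}) = \sigma_{\psi(L)}(\psi\phi\psi^{-1}) = \sigma_L(\phi),
\]
which is the desired conjugation invariance.

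The one thing I would have to verify carefully when writing this up is the exact form of the symplectic invariance axiom in \cite{LeclercqZapolsky15}: namely, that $\psi_*$ is a ring isomorphism sending $[L]$ to $[\psi(L)]$ and that the intertwining relation with $l_L$ uses the conjugate $\psi\phi\psi^{-1}$ on the Hamiltonian argument. Once those conventions are nailed down, the argument is essentially a two-line combination of Theorem \ref{thmcob1} with symplectic invariance of spectral numbers, requiring no further analytic input; this is consistent with the result being stated as a corollary.
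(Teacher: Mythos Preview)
Your proposal is correct and follows essentially the same approach as the paper: both combine the symplectic invariance property $l_L^+(\phi)=l_{\psi(L)}^+(\psi\phi\psi^{-1})$ (via $\psi_*[L]=[\psi(L)]$) with Theorem \ref{thmcob1} to conclude $\sigma_L(\psi\phi\psi^{-1})=\sigma_{\psi(L)}(\psi\phi\psi^{-1})=\sigma_L(\phi)$. The only cosmetic difference is that the paper writes this as a single chain of equalities passing through the limit, while you first pass to the asymptotic invariant and then apply the cobordism invariance.
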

By the Lagrangian suspension construction this result implies in particular that $\sigma_L$ is invariant under conjugation by elements of $\Ham(M,\omega)$. We do not know of any examples $\psi \in \Symp(M,\omega)\backslash \Ham(M,\omega)$ such that $L$ and $\psi(L)$ are in the same elementary Lagrangian cobordism class.
\begin{remark}
Consider a pair of Lagrangians $L,L'\in \mathcal{L}^*_{\tau}$ satisfying $L\cap L'=\emptyset$. Now choose a normalized\footnote{If $(M^{2n},\omega)$ is a closed symplectic manifold we say that a Hamiltonian $H\in C^{\infty}([0,1]\times M)$ is \emph{normalized} if $\int_M H_t \omega^n =0$ for all $t\in [0,1]$. If $(M^{2n},\omega)$ is non-compact we say that $H\in C^{\infty}([0,1]\times M)$ is \emph{normalized} if it has compact support.} autonomous Hamiltonian $H\in C^{\infty}(M)$ satisfying $H|_L \equiv c$ and $H|_{L'} \equiv c'$ for constants $c\neq c'$. Then
\begin{equation*}
	\sigma_L(\phi_H)=c\neq c'=\sigma_{L'}(\phi_H),
\end{equation*} 
by the Lagrangian control property from \cite{LeclercqZapolsky15}. In view of Theorem \ref{thmcob1} this observation gives a new proof of the following result which can also be derived from Biran and Cornea's work \cite{BiranCornea13} (see Remark \ref{cobrem1} below).
\end{remark}
\begin{corollary}
	\label{cobcor191919}
	Let $L\in \mathcal{L}^*_{\tau}$. If $L' \in \mathcal{L}_{\tau}$ is in the same elementary Lagrangian cobordism class as $L$ then $L \cap L' \neq \emptyset$. 
\end{corollary}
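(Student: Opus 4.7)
My plan is to prove the corollary by contradiction, following verbatim the outline sketched in the remark immediately preceding the statement. Suppose $L\in \mathcal{L}^*_\tau$, that $L'\in \mathcal{L}_\tau$ is in the same elementary Lagrangian cobordism class as $L$, and that $L\cap L'=\emptyset$; I aim to contradict Theorem \ref{thmcob1}.

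First I need to make sense of $\sigma_{L'}$, which requires $L'\in \mathcal{L}^*_\tau$. This is automatic: an elementary cobordism $V:L'\rightsquigarrow L$ yields, via Theorem 2.2.2 in \cite{BiranCornea13}, a ring isomorphism $\Phi_V:QH_*(L)\stackrel{\cong}{\to} QH_*(L')$, so $QH_*(L)\neq 0$ forces $QH_*(L')\neq 0$. With $\sigma_{L'}$ well-defined, Theorem \ref{thmcob1} applies and delivers $\sigma_L=\sigma_{L'}$ on all of $\widetilde{\Ham}(M,\omega)$.

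Next, using the hypothesis $L\cap L'=\emptyset$, I choose disjoint open neighborhoods $U\supset L$ and $U'\supset L'$ and pick two distinct constants $c\neq c'$. A standard bump-function construction then produces an autonomous Hamiltonian $H_0\in C^\infty(M)$, compactly supported in the non-compact case, such that $H_0|_L\equiv c$ and $H_0|_{L'}\equiv c'$. In the closed case I subtract the mean $\tfrac{1}{\mathrm{vol}(M)}\int_M H_0\,\omega^n$ to normalize; since this shift is the same on $L$ and $L'$, the resulting normalized $H$ still satisfies $H|_L\equiv \tilde c$, $H|_{L'}\equiv \tilde c'$ with $\tilde c\neq \tilde c'$, and after relabeling I simply write $c\neq c'$.

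Since $H$ is autonomous, $\phi_H^k=\phi_{kH}$, and the Lagrangian control property from \cite{LeclercqZapolsky15} gives $l_L^+(\phi_H^k)=kc$ and $l_{L'}^+(\phi_H^k)=kc'$. Dividing by $k$ and letting $k\to\infty$ yields $\sigma_L(\phi_H)=c\neq c'=\sigma_{L'}(\phi_H)$, contradicting Theorem \ref{thmcob1}. The only non-formal point in this argument is the verification that the Lagrangian control property can indeed be invoked simultaneously on $L$ and on $L'$ for the same $H$ (which is the content of that property in \cite{LeclercqZapolsky15}); everything else is a direct packaging of Theorem \ref{thmcob1} together with the definition of $\sigma$.
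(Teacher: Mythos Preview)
Your proof is correct and follows exactly the approach the paper sketches in the remark immediately preceding the corollary: assume $L\cap L'=\emptyset$, build a normalized autonomous Hamiltonian constant (with distinct values) on $L$ and $L'$, apply the Lagrangian control property to obtain $\sigma_L(\phi_H)\neq\sigma_{L'}(\phi_H)$, and contradict Theorem~\ref{thmcob1}. You have simply spelled out in more detail the construction of $H$, the normalization, and the passage from $l^+$ to $\sigma$; nothing differs in substance from the paper's argument.
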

In fact there is even a third proof of this fact based on the metric $d_s$: If $L,L'\in \mathcal{L}^*_{\tau}$ satisfy $L\cap L' =\emptyset$ then it is easy to check that $d_s(L,L')=\infty$. In particular Corollary \ref{corref} implies that $d_c(L,L')=\infty$ and therefore $L$ and $L'$ cannot be in the same elementary Lagrangian cobordism class.  
\begin{example}
Spectral invariants coming from Floer theory are known to be very hard to compute. However, given $L\in \mathcal{L}^*_{\tau}$, a consequence of Theorem \ref{thmcob1} is that there is a rather large subset of $\widetilde{\Ham}(M,\omega)$ on which $\sigma_L$ can be computed explicitly! Consider the subgroup $\mathcal{G}_L\subset \widetilde{\Ham}(M,\omega)$ defined by 
\[
\phi \in \mathcal{G}_L \stackrel{Def.}{\Longleftrightarrow} (\exists \{\phi_t\}_{t\in [0,1]} \in \phi \ : \ \phi_t(L)=L\ \forall \ t\in [0,1]).
\]
In other words $\mathcal{G}_L$ consists exactly of the homotopy classes (rel. endpoints) of paths $\phi$ in $\Ham(M,\omega)$, based at the identity, which contain a path $\{\phi_t\}_{t\in [0,1]}$ satisfying $\phi_t(L)=L$ for all $t\in [0,1]$. It is easy to check that $\phi \in \mathcal{G}_L$ if and only if $\phi=\phi_H$ for a normalized $H\in C^{\infty}([0,1]\times M)$ satisfying $H_t|_L=c(t)$ for some $c\in C^{\infty}([0,1])$. For such a Hamiltonian the Lagrangian control property from \cite{LeclercqZapolsky15} reads
\begin{equation}
\label{Cob999}
\sigma_L(\phi_H)=\int_0^1 c(t)dt.
\end{equation}
Applying Theorem \ref{thmcob1} this has the following interesting consequence: $\sigma_L(\phi)$ \emph{can be computed explicitly by a formula similar to (\ref{Cob999}) for every}
\begin{equation}
\label{cobeq99999}
\phi \in \bigcup_{L'}\mathcal{G}_{L'}.
\end{equation}
\emph{Here $L'$ runs over the entire elementary Lagrangian cobordism class of $L$.} Note that, by the Lagrangian suspension construction, the orbit of $L$ under the natural action $\Ham(M,\omega)\times \mathcal{L}_{\tau} \to \mathcal{L}_{\tau}$ is contained in the elementary Lagrangian cobordism class of $L$. It is therefore clear that the union in (\ref{cobeq99999}) is a rather large set in general. 
\end{example}

Analogues of $\sigma_L$ have a very prominent history in symplectic topology. In the case of spectral invariants coming from Hamiltonian Floer homology the study of the analogue of this quantity was pioneered by Entov and Polterovich in their development of \emph{Calabi quasimorphisms} on $\widetilde{\Ham}(M,\omega)$ \cite{EntovPolterovich03} (see also \cite{PolterovichRosen14}). In case $L$ is the zero-section of the cotangent bundle $T^*N$ of a closed manifold $N$ Monzner, Vichery and Zapolsky \cite{ZapolskyMonznerVichery12} showed, using ideas due to Viterbo \cite{Viterbo08}, that $\sigma_L$ is closely related to Mather's $\alpha$-function. Moreover, for the special case $N=\mathbb{T}^n$, they showed that $\sigma_L$ is closely related to Viterbo's homogenization operator.

\subsection{What happens in the (weakly) exact case?}
\label{adapcob}
If one chooses to work with weakly exact Lagrangians one can obtain the results in Section \ref{MainResults} in a slightly different form. For the convenience of the reader we here point out these changes.

\subsubsection{The case $\mu|_{\pi_2(M,L)}\equiv 0$}
One alternative construction of spectral invariants which is relevant for our purposes was carried out by Leclercq \cite{Leclercq08} for closed Lagrangians $L\subset (M,\omega)$ verifying 
\begin{equation}
\label{eqweak}
\omega|_{\pi_2(M,L)}\equiv 0 \quad \& \quad \mu|_{\pi_2(M,L)}\equiv 0.
\end{equation}
Note that the existence of such a Lagrangian in $(M,\omega)$ implies that $M$ is symplectically aspherical in the sense that $\omega|_{\pi_2(M)}\equiv 0$ and $c_1(TM)|_{\pi_2(M)}\equiv 0$. We denote by $\mathcal{L}_0$ the space of all closed Lagrangians $L\subset (M,\omega)$ satisfying (\ref{eqweak}). Note that for any $L\in \mathcal{L}_0$, $QH_*(L)$ reduces to $H_*(L)=H_*(L;\Z_2 )$. As already mentioned this setting is covered by our results in Section \ref{MainResults}. However, it is also possible to recover some of our results using Leclercq's spectral invariants which satisfy particularly nice properties. The spectral invariant function 
\[
c(\cdot \ ;L,\cdot ):H_*(L)\times \mathcal{H}(L)\to \R \cup \{-\infty \},
\] 
constructed by Leclercq in \cite{Leclercq08}, is associated to a fixed $L\in \mathcal{L}_0$. Among other properties he showed that $c(\alpha;L, L')=-\infty $ if and only if $\alpha=0\in H_*(L)$ and, for every pair $(\alpha, L')\in H_*(L)\times \mathcal{H}(L)$ for which $\alpha \neq 0$, one has $0\leq c(\alpha;L,L')\leq d_H(L,L')$. For $L\in \mathcal{L}_0$ the results in Section \ref{secLagmet} continue to be true, \emph{mutatis mutandis}, if one replaces $l_L$ by $c(\cdot \ ; L,\cdot)$. More precisely, when replacing $l_L$ by $c(\cdot \ ;L,\cdot )$, $QH_*(L)$ is replaced by $H_*(L)$, $\widetilde{\Ham}(M,\omega)$ is replaced by $\mathcal{H}(L)$ and the statements hold for elementary Lagrangian cobordisms $V$ verifying $\tilde{\omega}|_{\pi_2(\tilde{M},V)}\equiv 0\equiv \mu|_{\pi_2(\tilde{M},V)}$. One could of course also study the asymptotic version of $c(\cdot \ ; L,\cdot )$. However, it is not clear to us that this quantity contains information about Lagrangian cobordisms. 

\subsubsection{The case $\mu|_{\pi_2(M,L)}\neq 0$}
\label{weakex}
Recall that a Lagrangian $L\subset (M,\omega)$ is said to be weakly exact if $\omega|_{\pi_2(M,L)}\equiv 0$. We denote by $\mathcal{L}_{we}(M,\omega)$ the space of all closed and weakly exact Lagrangian submanifolds in $(M,\omega)$. In case $(M,\omega=d\lambda )$ is exact\footnote{Recall that $(M,\omega)$ is said to be \emph{exact} if $\omega=d\lambda$ for some 1-form $\lambda$ on $M$. In this case a Lagrangian $L\subset (M,d\lambda)$ is said to be \emph{exact} (with respect to $\lambda$) if $\lambda|_L=df$ for some $f\in C^{\infty}(L)$.} the exact Lagrangians are special cases of weakly exact Lagrangians. The version of spectral invariants developed in \cite{LeclercqZapolsky15} was initially constructed in the exact setting for the particular case of the zero-section in a cotangent bundle by Oh \cite{Oh97}, \cite{Oh99}. The parts of Oh's scheme which are needed for our results can also be carried out for Lagrangians in $\mathcal{L}_{we}(M,\omega)$ (see \cite{LeclercqZapolsky15}, \cite{Zapolsky13}). In this setting $HF(L)$ does not necessarily carry a $\Z$-grading but is still isomorphic to $H(L)=\oplus_{k=0}^nH_k(L;\Z_2)$. Zapolsky \cite{Zapolsky13} showed that, for $L\in \mathcal{L}_{we}$, $l_L$ in fact descends to $\Ham(M,d\lambda)$:
\[
l_L:H(L)\times \Ham(M,d\lambda)\to \R \cup \{-\infty \}.
\]
Therefore, one recovers all results from Section \ref{Lagspecinv} with the one difference that $\widetilde{\Ham}(M,\omega)$ can be replaced by $\Ham(M,d\lambda)$ throughout, given that one also restricts to looking at Lagrangian cobordisms $V$ satisfying $\tilde{\omega}|_{\pi_2(\tilde{M},V)}\equiv 0$. The same goes for the results in Section \ref{secLagmet}. For additional properties of $\sigma_L$ for $L\in \mathcal{L}_{we}$ we refer to \cite{ZapolskyMonznerVichery12}, where the case of a zero-section in a cotangent bundle is studied in detail. 
\subsection*{Acknowledgment}
The work presented here is carried out in the framework of my PhD at the ETH Z{\"u}rich. I am grateful to my advisors Paul Biran and Will J. Merry for all the helpful discussions. Especially I want to thank Paul for encouraging me to think independently about Lagrangian cobordisms. I also want to thank Frol Zapolsky for generously sharing his ideas on spectral invariants, Luis Haug for patiently helping me understand his work \cite{Haug15} and Egor Shelukhin as well as R{\'e}mi Leclercq for helping me improve the exposition of my results. Last but certainly not least I am indebted to the anonymous referee whose careful reading and generous advise significantly improved the quality of the paper.  

\section{Preliminaries on Lagrangian spectral invariants}
\label{SecLagInv}
Fix $L\in \mathcal{L}_{\tau}$. Given a pair $(H,J)$, where $H\in C^{\infty}([0,1]\times M)$ is a (time-dependent) Hamiltonian satisfying $\phi_H^1(L) \pitchfork L$ and $J=\{J_t\}_{t\in [0,1]}$ is a generic smooth path of $\omega$-compatible almost complex structures, one can construct the Floer homology group $HF_*(H,J:L)$. Recall that $HF_*(H,J:L)$ can be thought of as the Morse homology of the action functional $\mathcal{A}_{H:L}$. Here we view $\mathcal{A}_{H:L}$ as being defined on the space $\overline{\Omega}_L$ consisting of equivalence classes of pairs $\widetilde{\gamma}=[\gamma,\widehat{\gamma}]$ where $\gamma:([0,1],\{0,1\})\to (M,L)$ and $\widehat{\gamma}:(\dot{D}^2, \partial \dot{D}^2 )\to (M,L)$ is a capping of $\gamma$ ($\dot{D}^2=D^2\backslash \{1\} \subset \C$ denotes the punctured unit disc). The equivalence relation is given by identifying cappings of equal symplectic area. Following \cite{Zapolsky15} we use the convention\footnote{Note that $\mathcal{A}_{H:L}$ is defined \emph{absolutely} here. In other words, since the definition of $HF_*(H,J:L)$ in \cite{LeclercqZapolsky15} and \cite{Zapolsky15} does not require the choice of a base point in $\Omega_L$ there is no need to normalize spectral invariants. This will be important below.}
\begin{equation*}
\mathcal{A}_{H:L}(\widetilde{\gamma}=[\gamma, \widehat{\gamma}])=\int_0^1 H_t(\gamma(t)) \ dt - \int \widehat{\gamma}^*\omega, \quad \widetilde{\gamma}\in \overline{\Omega}_L.	
\end{equation*}
Lagrangian Floer homology was first developed by Floer \cite{Floer88} and later developments were carried out by Oh \cite{Oh93}, \cite{Oh931}. Today Lagrangian Floer theory is a well-documented theory and some of the standard references to which we refer for further details are \cite{Seidel08}, \cite{Oh151} and \cite{Oh152}. Here and throughout the paper we will follow the conventions and notation appearing in \cite{Zapolsky15}, to which we also refer the interested reader. Assuming $L\in \mathcal{L}^*_{\tau}$ one can use $HF_*(H,J:L)$ to extract so-called \emph{spectral invariants}. This idea was recently developed in the monotone setting by Leclercq and Zapolsky \cite{LeclercqZapolsky15}. Leclercq and Zapolsky constructed a \emph{Lagrangian spectral invariant} function
\begin{equation}
\label{cob20}
l_L:QH_*(L)\times \widetilde{\Ham}(M,\omega)\to \R \cup \{-\infty\},
\end{equation}
satisfying $l_L(\alpha,\phi)=-\infty$ if and only if $\alpha=0\in QH_*(L)$. This function is defined by "mimicking" classical critical point theory as follows. $HF_*(H,J:L)$ is the homology of the Floer chain complex $(CF_*(H,J:L),d)$ where $CF_*(H,J:L)$ is the $\Z_2$-vector space generated by critical points of $\mathcal{A}_{H:L}$ and $d$ is defined by "counting finite energy Floer trajectories". Given $a\in \R$ we denote by $CF^a_*(H,J:L)\subset CF_*(H,J:L)$ the subspace generated by those $\crit(\mathcal{A}_{H:L})$-points whose action is $<a$. Floer-trajectories can be interpreted as negative gradient flow lines for $\mathcal{A}_{H:L}$, so $d$ restricts to a differential on $CF^a_*(H,J:L)$. We denote by
\[
\iota^a:CF^a_*(H,J:L) \hookrightarrow CF_*(H,J:L)
\]
the inclusion and by $\iota^a_*:HF^a_*(H,J:L) \rightarrow HF_*(H,J:L)$ the map induced on homology. Identifying all the groups $HF_*(H,J:L)$ for different choices of data $(H,J)$ we obtain the Floer homology ring of $L$, $HF_*(L)$. After choosing a quantum datum for $L$, $QH_*(L)$ is well-defined and ring-isomorphic to $HF_*(L)$ via a PSS-type isomorphism
\[
\text{PSS}:QH_*(L)\stackrel{\cong}{\longrightarrow} HF_*(L).
\]
Given $\alpha \in QH_*(L)$ and a Floer datum $(H,J)$ Leclercq and Zapolsky define
\[
l_L(\alpha,H,J):=\inf\{a\in \R \ |\ \text{PSS}(\alpha)\in \image(\iota_*^a)\subset HF_*(H,J:L)\}.
\]
They then further show that $l_L(\alpha,H,J)$ is independent of $J$ and that $l_L$ descends to a function (\ref{cob20}) satisfying many additional properties \cite{LeclercqZapolsky15}.

\section{Preliminaries on Lagrangian cobordism}
\label{secLagCob}
Recently Biran and Cornea introduced several new methods for studying Lagrangian submanifolds via Lagrangian cobordisms \cite{BiranCornea13}, \cite{BiranCornea14}. Here we follow their work. Recall that $\pi :\tilde{M}=\R^2 \times M\to \R^2$ denote the canonical projection. For subsets $V\subset \tilde{M}$ and $U\subset \R^2$ we write $V|_{U}=V\cap \pi^{-1}(U)$.
\begin{definition}
We say that two families $(L_i)_{i=0}^{k_-}$ and $(L'_j)_{j=0}^{k_+}$ of closed connected Lagrangian submanifolds of $(M,\omega)$ are \emph{Lagrangian cobordant} if for some $R>0$ there exists a smooth compact Lagrangian submanifold $V\subset ([-R,R]\times \R \times M, \omega_{\R^2}\oplus \omega )$ with boundary $\partial V =V\cap (\{\pm R\}\times \R \times M)$ satisfying the condition that for some $\epsilon >0$ we have
\begin{align}
\label{cob21}
V|_{[-R,-R+\epsilon)\times \R}&=\bigsqcup_{i=0}^{k_-}([-R,-R+\epsilon)\times \{i\})\times L_i \\
\label{cob22}
V|_{(R-\epsilon,R]\times \R}&=\bigsqcup_{j=0}^{k_+}((R-\epsilon,R]\times \{j\})\times L'_j.
\end{align}
In particular $V$ defines a smooth compact cobordism $(V,\bigsqcup_{i=0}^{k_-}L_i,\bigsqcup_{j=0}^{k_+}L'_j)$. We write $V:(L'_j)_j \rightsquigarrow (L_i)_i$.

\end{definition}  
Our notation will not distinguish between a Lagrangian cobordism and its obvious horizontal $\R$-extension. This extension is a \emph{Lagrangian with cylindrical ends}. More generally we have
\begin{definition}[\cite{BiranCornea13}]
A \emph{Lagrangian with cylindrical ends} is a Lagrangian submanifold $V\subset (\tilde{M},\tilde{\omega})$ without boundary satisfying the conditions that $V|_{[a,b]\times \R}$ is compact for all $a<b$ and that there exists $R>0$ such that 
\begin{align*}
V|_{(-\infty,-R]\times \R}&=\bigsqcup_{i=0}^{k_-}((-\infty,-R]\times \{a^-_i\})\times L_i \\
V|_{[R,\infty)\times \R}&=\bigsqcup_{j=0}^{k_+}([R,\infty)\times \{a^+_j\})\times L'_j
\end{align*}
for Lagrangians $L_i,L'_j \subset (M,\omega)$ and constants $a^-_i,a^+_j\in \R$ verifying $a^-_i\neq a^-_{i'}$ for $i\neq i'$ and $a^+_j\neq a^+_{j'}$ for $j\neq j'$.
\end{definition}
We will be interested in specific Lagrangian cobordisms and Lagrangians with cylindrical ends which allow us to compare Floer-theoretic invariants of the ends.
\begin{definition}
Given two families $(L_i)_{i=0}^{k_-},(L'_j)_{j=0}^{k_+}\subset \mathcal{L}_{\tau}$ we say that a Lagrangian cobordism $V:(L'_j)_j \rightsquigarrow (L_i)_i$ is \emph{admissible} if $V\subset (\tilde{M},\tilde{\omega})$ is itself a monotone, connected Lagrangian submanifold with monotonicity constant $\tau_V=\tau$ and minimal Maslov number $N_V \geq 2$. We say that $V$ is an \emph{elementary Lagrangian cobordism} if $V$ is admissible and satisfies $k_+=k_-=0$, i.e. if there is only one positive and one negative end.
\end{definition} 
For examples of Lagrangian cobordisms we refer to \cite{Haug15}, \cite{BiranCornea13} and \cite{Chekanov97}.
\begin{remark}
Note that "being cobordant by an elementary Lagrangian cobordism" is an equivalence relation on $\mathcal{L}_{\tau}$.
\end{remark}
\begin{remark}
\label{cobrem1}
If $L,L'\in \mathcal{L}_{\tau}$ are in the same elementary Lagrangian cobordism class then the Floer homology group $HF(L,L')$ with coefficients in the universal Novikov ring over the base ring $\Z_2$ is well-defined \cite{BiranCornea13}. If $QH_*(L)\neq 0$ then results from \cite{BiranCornea13} imply that $HF(L,L')\neq 0$. In particular Corollary \ref{cobcor191919} follows.
\end{remark}
Quantum (and Floer) homology for Lagrangians with cylindrical ends was introduced by Biran and Cornea \cite{BiranCornea13}, \cite{BiranCornea14} and further studied by Singer \cite{Singer15}. Since action estimates are crucial for our intentions we will make some small adaptions in the construction of Lagrangian Floer homology from \cite{BiranCornea13} to make it suit our purposes.

\section{Proofs of results}
\label{Lagsecproof}
Here we develop the theory needed to prove our results. Most of our results are in fact consequences of Theorem \ref{lemcob1} whose proof we postpone until the end. For the proof of Proposition \ref{propref} it will be convenient to view $l_L$ as a function 
\[
l_L:QH_*(L)\times C^{\infty}_c([0,1]\times M)\to \R \cup \{-\infty\},
\]
so that we don't have to worry about normalizing our Hamiltonians \cite{LeclercqZapolsky15}. Given $L\in \mathcal{L}_{\tau}$ we will denote by $U=U(L)\subset M$ a Darboux-Weinstein neighborhood of $L\subset M$. In particular we have a neighborhood $W=W(L)\subset T^*L$ of $L\subset T^*L$ and a symplectic identification $U\approx W$ which restricts to the identity on $L$ \cite{McDuffSalamon98}. For the proof of Proposition \ref{propref} we will need
\begin{lemma}
	\label{lemref1}
	Fix $L\in \mathcal{L}^*_{\tau}$. Denote $b:U\to L$ the restriction of the base-point map $T^*L\to L$ to $W\approx U$. Let $h\in C^{\infty}(L)$ be a Morse function such that $\max_L|h|< \tfrac{\tau_L N_L}{2}$ and $\Graph(dh)\subset Y$, where $Y$ is a precompact and fiber-wise convex neighborhood of the 0-section in $W\approx U$. Define $H\in C^{\infty}_c(M)$ by $H:=\varphi b^*h$, where $\varphi \in C^{\infty}_c(U;[0,1])$ is a cutoff satisfying $\varphi|_Y\equiv 1$. Then there exists $q\in \crit_n(h)$ such that\footnote{Here $\crit_n(h)$ denotes the critical points of $h$ whose Morse index equals $n$.}
	\[
	l^+_L(H)=h(q).
	\]
\end{lemma}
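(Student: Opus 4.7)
My strategy is to reduce the computation of $l_L^+(H)$ to a purely Morse-theoretic calculation on $L$, exploiting that $H$ is autonomous and concentrated near $L$ with a small enough $C^0$-norm that no nontrivial disc bubbling can contribute.

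First I would analyze the Hamiltonian chords and the action spectrum. Since $\Graph(dh)\subset Y$ and $\varphi\equiv 1$ on $Y$, any chord $\gamma\colon([0,1],\{0,1\})\to(M,L)$ of $H$ stays in $Y$ and is therefore a chord of the lifted Hamiltonian $b^{*}h$ on the cotangent neighborhood $W\approx U$. A direct computation in the cotangent coordinates shows that such chords are exactly the constants $\gamma\equiv q$ with $q\in\crit(h)$, and that $\mathcal{A}_{H:L}([q,\text{const}])=h(q)$. By monotonicity, any other capping changes the action by a multiple of $\tau_L N_L$, so
\[
\Spec(\mathcal{A}_{H:L})=\{\,h(q)+k\tau_L N_L\,:\,q\in\crit(h),\ k\in\Z\,\}.
\]
The Hofer-type continuity of $l_L^+$ (cf.\ \cite{LeclercqZapolsky15}), combined with the normalization $l_L^+(0)=0$, gives $|l_L^+(sH)|\leq s\max_M|H|<s\,\tau_LN_L/2$ for every $s\in[0,1]$. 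Together with spectrality, this forces $l_L^+(sH)=s\,h(q_s)$ for some $q_s\in\crit(h)$ at every $s\in[0,1]$, i.e.\ only the trivial capping can contribute.

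Next I would identify the Floer complex with the Morse complex of $h$. For a generic almost complex structure $J$ adapted to the Darboux--Weinstein chart (for instance the one coming from a Riemannian metric on $L$ through the canonical almost complex structure on $T^{*}L$), the classical autonomous Floer/Morse comparison (Floer, Poźniak, Oh) yields a filtered chain isomorphism between $CF_{*}(H,J:L)$ restricted to the action window $(-\tau_LN_L/2,\tau_LN_L/2)$ and the Morse complex $(CM_{*}(h),\partial_{\text{Morse}})$, with Floer action equal to $h(q)$ and Floer degree equal to $\text{ind}_{\text{Morse}}(q)$; no bubbling or non-trivially capped strips can interfere because the relevant action window has length $<\tau_LN_L$. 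Standard naturality of the Leclercq--Zapolsky PSS then identifies $\text{PSS}([L])\in HF_{n}(H,J:L)$ with the Morse fundamental class $[L]_{\text{Morse}}\in HM_{n}(h)$, i.e.\ with a cycle supported on $\crit_{n}(h)$. Under the definition of $l_L^+$, this turns
\[
l_L^+(H)=\inf\bigl\{a\in\R\,:\,\text{PSS}([L])\in\image(\iota^{a}_{*})\bigr\}
\]
into the Morse min-max value of $[L]$, which is a critical value of $h$ attained on a critical point of index $n$. This gives the claimed $q\in\crit_{n}(h)$ with $l_L^+(H)=h(q)$.

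The main obstacle will be Step two: carefully matching the Leclercq--Zapolsky conventions (covering $\overline{\Omega}_L$, $\Lambda$-coefficients, absolute action functional) with the Morse-theoretic side, and checking that $\text{PSS}([L])$ is genuinely sent to the \emph{untwisted} Morse fundamental class rather than a Novikov-shifted representative. If one wants to avoid this comparison argument, an alternative is to stay within Floer theory: continuity of $s\mapsto l_L^+(sH)$ on $[0,1]$, combined with the fact that by Steps above this function takes values in the locally constant set $\{s\cdot h(q)\}_{q\in\crit(h)}$, forces $l_L^+(sH)=s\,h(q_{*})$ for a single $q_{*}\in\crit(h)$. A grading argument via the Maslov--Viterbo index of the constant chord at $q_{*}$ (which for $H=\varphi\,b^{*}h$ equals $\text{ind}_{\text{Morse}}(q_{*})$) then identifies $q_{*}$ as an index-$n$ critical point, since $\text{PSS}([L])$ lives in degree $n$.
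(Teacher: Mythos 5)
Your fallback argument (the last paragraph of your proposal) is essentially the paper's own proof, and it is the route you should lead with: $H$ is nondegenerate since $\phi_H^1(L)=\Graph(\pm dh)\pitchfork L$ for $h$ Morse, so the graded form of spectrality (Leclercq--Zapolsky) gives a generator $[\gamma,\widehat{\gamma}]$ of Conley--Zehnder index $n$ with $l_L^+(H)=\mathcal{A}_{H:L}([\gamma,\widehat{\gamma}])$; your chord analysis identifies $\gamma$ with a constant chord at some $q\in\crit(h)$; the continuity bound $|l_L^+(H)|\leq\max_L|h|<\tfrac{\tau_LN_L}{2}$ excludes any capping with $\mu(\widehat{\gamma})\neq 0$, since such a generator has action of absolute value at least $\tau_LN_L-\max_L|h|>\tfrac{\tau_LN_L}{2}$; and then the index formula $n=|q|_h-\mu(\widehat{\gamma})=|q|_h$ yields $q\in\crit_n(h)$ and $l_L^+(H)=h(q)$. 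One small imprecision: the continuity-plus-spectrality step pins down only the critical \emph{value} (several critical points, possibly of different indices, may share it), so the grading step must be phrased as ``the degree-$n$ generator attaining the spectral value is trivially capped, hence its underlying critical point has Morse index $n$'' --- which is exactly the argument above.

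Your primary route, by contrast, is not justified as written. The hypothesis on $h$ is only a $C^0$ bound ($\max_L|h|<\tfrac{\tau_LN_L}{2}$); $h$ may be $C^2$-large, so the classical autonomous Floer/Morse comparison does not give a filtered \emph{chain} isomorphism between the action-window Floer complex and the Morse complex of $h$: low-energy Floer strips need not correspond to Morse trajectories, and one would additionally have to show that such strips cannot escape the Darboux--Weinstein neighborhood (this is where the fiberwise convexity of $Y$ would have to enter, via a maximum-principle or monotonicity argument, and it is not automatic). Finally, identifying $\PSS([L])$ with the untwisted Morse fundamental class is precisely the Novikov-shift issue you flag yourself. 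None of this machinery is needed: the action-width argument already kills all capping corrections, which is how the paper proceeds; so keep the heavy Step~2 only if you actually want the sharper statement $l_L^+(H)=\max_L h$, which the lemma does not claim.
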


\begin{proof}[Proof of Proposition \ref{propref}]
	Let $L,L'\in \mathcal{L}^*_{\tau}$ with $L\neq L'$. Denote by $b:U(L) \to L$ the restriction of the base-point map $T^*L\to L$ to $W(L)\approx U(L)$. Choose $q\in L \backslash L'$ and a Morse function $f\in C^{\infty}(L)$ such that $\crit_n(f)=\{q\}$. Fix a Morse chart $B\subset L\backslash L'$ at $q$ and a bump function $h\in C_c^{\infty}(B)$ attaining its unique maximum at $q$ with $0<h(q)<\tfrac{\tau_LN_L}{2}$. By perhaps rescaling $h$ we may assume that $\Graph(dh|_B)\cap (L'\cap U(L))=\emptyset$ and choose a cutoff $\varphi \in C^{\infty}_c(U(L))$ as in Lemma \ref{lemref1} such that $\varphi|_{b^{-1}(B)\cap L'}=0$. Consider for small $\epsilon \geq 0$ the autonomous Hamiltonian $H^{\epsilon}:=\varphi b^*(h+\epsilon f)\in C^{\infty}_c(M)$. Since $H^{0}|_{L'}\equiv 0$ the Lagrangian control property from \cite{LeclercqZapolsky15} implies that $l^+_{L'}(H^{0})=0$. Moreover, for all small $\epsilon >0$ we have
	\[
	l^+_L(H^{\epsilon})=h(q)+\epsilon f(q)
	\]
	by the lemma and $H^{\epsilon} \stackrel{\epsilon \to 0}{\longrightarrow} H^{0}$ uniformly. Thus, continuity of $l_L^{+}$ implies $l^+_L(H^{0})=h(q)$. Hence,
	\[
	d_s(L,L')\geq |l_L^+(\phi_{H^{0}})-l_{L'}^+(\phi_{H^{0}})|=|l_L^+(H^{0})-l_{L'}^+(H^{0})|=h(q) >0.
	\]
\end{proof}
\begin{proof}[Proof of Lemma \ref{lemref1}]
	By the spectrality property of $l_L$ \cite{LeclercqZapolsky15} and the fact that $[L]\in QH_n(L)$ we know that $l_L^+(H)= \mathcal{A}_{H:L}([\gamma, \widehat{\gamma}])$ for some $[\gamma, \widehat{\gamma}]\in \crit(\mathcal{A}_{H:L})$ whose Conley-Zehnder index equals $n$. By construction of $H$ we can identify $[\gamma, \widehat{\gamma}]\approx [q,\widehat{q}]$ where $q\in \crit(h)$ and $\widehat{q}$ is a topological disc in $M$ with boundary on $L$. The Conley-Zehnder index of $[q,\widehat{q}]$ equals $|q|_h-\mu(\widehat{q})$, where $|q|_h$ denotes the Morse index of $q$ and $\mu$ denotes the Maslov index \cite{Zapolsky15}. We claim that we must have $|q|_h=n$. To see this, assume for contradiction that $|q|_h<n$. Then $\mu(\widehat{q})=|q|_h-n<0$ and thus
	\begin{align*}
		\mathcal{A}_{H:L}([\gamma, \widehat{\gamma}])&=\int_0^1H(\gamma(t))dt-\int \widehat{\gamma}^*\omega =h(q)-\omega(\widehat{q}) \\
		&=h(q)-\tau_L\mu(\widehat{q})\geq -\max_{L}|h|+\tau_LN_L> \frac{\tau_LN_L}{2}. 
	\end{align*}
	But by the continuity property of $l^+_L$ we also have 
	\[
	|\mathcal{A}_{H:L}([\gamma, \widehat{\gamma}])|=|l_L^+(H)|\leq \max_M|H|=\max_L|h|<\frac{\tau_LN_L}{2},
	\] 
	which is a contradiction. This shows that $|q|_h=n$ and therefore $\mu(\widehat{q})=|q|_h-n=0$. It follows that 
	\[
	l_L^+(H)= \mathcal{A}_{H:L}([\gamma, \widehat{\gamma}])=h(q)-\tau_L \mu(\widehat{q})=h(q).
	\]
\end{proof}

\begin{remark}
	In the above proof we used the $\Z$-grading on $HF_*(L)$. If $L\in \mathcal{L}_{we}(M,\omega)$ with $\mu|_{\pi_2(M,L)}\neq 0$ then $HF(L)$ does not necessarily carry a $\Z$-grading (see Section \ref{weakex}). However, Proposition \ref{propref} continues to hold true also in this setting. We will not need this and therefore not carry out the proof. The basic idea is that, if the condition $\max_L|h|< \tfrac{\tau_L N_L}{2}$ in the statement of Lemma \ref{lemref1} is replaced by the condition that $h$ be $C^2$-small, then the Floer chain complex $CF(H,J:L)$ of the weakly exact Lagrangian $L$ reduces to the Floer chain complex $CF(H|_U,J|_U:L)$ of $L$ viewed as a the 0-section in $W\approx U$ (see \cite{Oh96}). But this chain complex carries a $\Z$-grading, simply given by the index of the critical points of $h$, so the above argument can be carried out.
\end{remark}

\begin{proof}[Proof of Theorem \ref{thmcob1}]	
Note first that the existence of $\Phi_V$ guarantees that any $L'\in \mathcal{L}_{\tau}$ in the same elementary Lagrangian cobordism class as $L$ is in fact an element of $\mathcal{L}^*_{\tau}$, so indeed $\sigma_{L'}:\widetilde{\Ham}(M,\omega)\to \R$ is well-defined. Moreover $L'$ and $L$ are in the same class if and only if $d_c(L,L')<\infty$. Assuming this is the case Corollary \ref{corcob1} gives
\begin{equation*}
|\sigma_L(\phi)-\sigma_{L'}(\phi)|= \lim_{k\to \infty}\frac{|l_L^+(\phi^k)-l_{L'}^+(\phi^k)|}{k}\leq \lim_{k\to \infty}\frac{d_c(L,L')}{k}=0
\end{equation*}
for all $\phi \in \widetilde{\Ham}(M,\omega)$.
\end{proof}

\begin{proof}[Proof of Corollary \ref{cobcor100}]
Note first that $\psi(L)\in \mathcal{L}_{\tau}$. Recall from \cite{BiranCornea07} and \cite{LeclercqZapolsky15} that any $\psi\in \Symp(M,\omega)$ induces an isomorphism
\[
\psi_*:QH_*(L) \to QH_*(\psi(L)).
\]
Clearly $\psi_*$ maps $[L]$ to $[\psi(L)]$. In particular it follows from the symplectic invariance property of Lagrangian spectral invariants \cite{LeclercqZapolsky15} that 
\begin{equation*}
l_L^+(\phi)=l_{\psi(L)}^+(\psi \phi \psi^{-1}) \quad \forall \ \phi \in \widetilde{\Ham}(M,\omega).
\end{equation*}
Assuming the existence of an elementary Lagrangian cobordism $V:L\rightsquigarrow \psi(L)$ it therefore follows from Theorem \ref{thmcob1} that
\begin{align*}
\sigma_L(\phi)&=\lim_{k\to \infty}\frac{l_L^+(\phi^k)}{k}=\lim_{k\to \infty}\frac{l_{\psi(L)}^+(\psi \phi^k \psi^{-1})}{k}\\
&=\lim_{k\to \infty}\frac{l_{\psi(L)}^+((\psi \phi \psi^{-1})^k)}{k}=\sigma_{\psi(L)}(\psi \phi \psi^{-1})=\sigma_{L}(\psi \phi \psi^{-1}).
\end{align*}
\end{proof}


\subsection{Floer homology, PSS and spectral invariants for Lagrangians with cylindrical ends}

Throughout this section we consider a connected monotone Lagrangian submanifold $V\subset (\tilde{M},\tilde{\omega})$ with cylindrical ends and minimal Maslov number $N_V \geq 2$. We denote by $(L_i)_{i=0}^{k_-}$ the family of Lagrangians in $(M,\omega )$ corresponding to negative ends of $V$ and by $(L'_j)_{j=0}^{k_+}$ the family of Lagrangians in $(M,\omega )$ corresponding to positive ends of $V$.

\subsubsection{Floer homology with Hamiltonian perturbations for Lagrangians with cylindrical ends} 
\label{HFcylends}
Our reference for Lagrangian Floer homology is \cite{Zapolsky15} and we adopt the conventions used there. Since we work in the setting of Lagrangians with cylindrical ends we will apply the machinery developed in \cite{BiranCornea13} and \cite{BiranCornea14} to deal with compactness issues. Due to the fact that we use many different references we will here point out how to combine the different approaches.

Here, following \cite{BiranCornea14}, Floer homology will be based on the choice of a class of \emph{perturbation functions} $h\in C^{\infty}(\R^2)$. Our requirements of $h$ will differ slightly from those in \cite{BiranCornea14}. We therefore point out the specific conditions which $h$ needs to satisfy. Fix a number $R>0$ such that $V$ is cylindrical outside $[-R,R]^2$,
\begin{align}
\label{cob6}
V|_{\R^2 \backslash [-R,R]^2}=& \left(\bigsqcup_{i=0}^{k_-}(-\infty,-R]\times \{a^-_i\}\times L_i\right)\cup \left( \bigsqcup_{j=0}^{k_+}[R,\infty)\times \{a^+_j\}\times L_j'\right).
\end{align}
We will require the following of $h$.
\begin{enumerate}[(ii)]
\item
Fix $\epsilon >0$ so small that all the sets $V_j^+=[R,\infty)\times [a_j^+-\epsilon,a_j^++\epsilon]$ and $V_j^-=(-\infty,-R]\times [a_j^--\epsilon,a_j^++\epsilon]$ are pairwise disjoint. We require that the support of $h$ be contained in the union of these and $[-C,C]^2$ where $C:=R+1$.
\item
The Hamiltonian isotopy $\phi_h^t$ associated to $h$ exists for all $t\in \R$.
\item
The restriction of $h$ to each of the sets $T_j^+=[C,\infty)\times [a_j^+-\tfrac{\epsilon}{2},a_j^++\tfrac{\epsilon}{2}]$ and $T_j^-=(-\infty,-C]\times [a_j^--\tfrac{\epsilon}{2},a_j^++\tfrac{\epsilon}{2}]$ takes the form
\begin{equation}
\label{cob5}
h(x,y)=\alpha_j^{\pm}x+\beta_j^{\pm}
\end{equation}
where each $\alpha_j^{\pm}\in \R \backslash \{0\}$ has absolute value so small that
\begin{align*}
\phi_h^t([C,\infty)\times \{a_j^+\})&\subset T_j^{+} \quad \forall \ t\in [-1,1]
\end{align*}
and
\begin{align*}
\phi_h^t((-\infty, -C]\times \{a_j^-\})&\subset T_j^{-} \quad \forall \ t\in [-1,1].
\end{align*}
\item
$\phi_h^t([-C,C]^2)=[-C,C]^2$ for all $t\in [-1,1]$.
\end{enumerate}

It is easy to verify the existence of such an $h$ and having fixed one we denote by $\mathfrak{h}$ the corresponding class of perturbation functions. This class is defined as follows: $h' \in C^{\infty}(\R^2)$ is an element of $\mathfrak{h}$ if and only if it satisfies (i)-(iv) and $h=h'$ outside $[-C,C]^2$.

Given a fixed class of perturbation functions $\mathfrak{h}$ we now specify the requirements for the data going into the definition of the Floer chain complexes we want to consider.
\begin{enumerate}[(ii)]
	\item	$\tilde{\mathcal{H}}_{\mathfrak{h}}$ denotes the space of all Hamiltonians $\tilde{H}\in C^{\infty}([0,1]\times \tilde{M})$ satisfying the condition that there is a compact subset $Y\subset (-C,C)^2$ (depending on $\tilde{H}$) such that
	\begin{equation}
	\label{cob1}
	\tilde{H}_t(z,p)=h(z)+H_t(p) \quad \forall \ (t,z,p)\in [0,1]\times (\R^2\backslash Y)\times M,
	\end{equation}
	for some Hamiltonian $H\in C_c^{\infty}([0,1]\times M)$ and some $h\in \mathfrak{h}$.
	\item
	$\tilde{\mathcal{J}}_{\mathfrak{h}}$ denotes the space of time dependent $\tilde{\omega}$-compatible almost complex structures $\tilde{J}=\{\tilde{J}_t\}_{t\in [0,1]}$ on $\tilde{M}$ satisfying the additional condition that the canonical projection $\pi: \tilde{M} \to \R^2$ restricts to a $(\tilde{J}_t,(\phi_h^t)_*i)$-holomorphic map on $(\R^2 \backslash [-C,C]^2)\times M$ for all $t\in [0,1]$ . Here $i$ denotes the canonical complex structure on $\C \approx \R^2$ and $h$ is some element of $\mathfrak{h}$.
\end{enumerate}

Note that $\tilde{\mathcal{H}}_{\mathfrak{h}}$ is a convex space. Given a non-degenerate $\tilde{H}\in \tilde{\mathcal{H}}_{\mathfrak{h}}$, in the sense that $\phi_{\tilde{H}}^1(V) \pitchfork V$, and a generic $\tilde{J}\in \tilde{\mathcal{J}}_{\mathfrak{h}}$ we want to consider the Floer chain complex 
\[
(CF_*(\tilde{H},\tilde{J}:V),d ),
\]
defined in \cite{Zapolsky15}. Due to our non-compact setting we need to verify that all finite energy Floer trajectories, i.e. finite energy solutions $u:\R \times [0,1]\to \tilde{M}$ of Floer's equation 
\[
\left\{
\begin{array}{l}
\partial_su +\tilde{J}_t(u)(\partial_tu -X_{\tilde{H}_t}(u))=0 \\
u(\R \times \{0,1\})\subset V,
\end{array}
\right.
\] 
stay in a compact set. The next proposition ensures that this is the case.
\begin{proposition}
\label{propcob1}
Let $\tilde{H}\in \tilde{\mathcal{H}}_{\mathfrak{h}}$ be non-degenerate and let $\tilde{J}\in \tilde{\mathcal{J}}_{\mathfrak{h}}$. Then all finite energy solutions of Floer's equation are contained in $[-C,C]^2\times M$. As a consequence the pair $(\tilde{H},\tilde{J})$ is regular for generic $\tilde{J}\in \tilde{\mathcal{J}}_{\mathfrak{h}}$ in the sense that $(CF_*(\tilde{H},\tilde{J}:V),d )$ is a well-defined chain complex. Moreover, for every two regular Floer data $(\tilde{H}^-,\tilde{J}^-),(\tilde{H}^+,\tilde{J}^+)\in \tilde{\mathcal{H}}_{\mathfrak{h}} \times \tilde{\mathcal{J}}_{\mathfrak{h}}$ and every regular homotopy of Floer data from $(\tilde{H}^-,\tilde{J}^-)$ to $(\tilde{H}^+,\tilde{J}^+)$, there is a continuation chain map 
\begin{equation*}
(CF_*(\tilde{H}^-,\tilde{J}^-:V),d )\to (CF_*(\tilde{H}^+,\tilde{J}^+:V),d ),
\end{equation*}
which induces an isomorphisms on homology. This isomorphism is canonical in the sense that it is independent of the choice of regular homotopy of Floer data. 
\end{proposition}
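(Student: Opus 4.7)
The substantive content of the Proposition is the confinement statement for finite energy Floer trajectories; transversality and the existence of canonical continuation isomorphisms then follow from standard Floer-theoretic machinery. My plan is to concentrate on confinement and only sketch the rest.

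For the confinement, let $u : \R \times [0,1] \to \tilde{M}$ be a finite energy solution of Floer's equation with boundary on $V$, asymptotic to Hamiltonian $1$-chords $\gamma_{\pm}$ at $s = \pm \infty$. The first step is to observe that all such chords already lie in $[-C,C]^2 \times M$: on each end the time-one flow of $\tilde{H}$ splits as a product $\phi_{\tilde{H}}^1 = \phi_h^1 \times \phi_H^1$ by (\ref{cob1}), and the linear form (\ref{cob5}) with $\alpha_j^{\pm} \neq 0$ forces $\phi_h^1$ to translate each ray of $\pi(V)$ off itself in the $y$-direction, ruling out chords contained entirely in an end; the preservation condition $\phi_h^t([-C,C]^2) = [-C,C]^2$ handles the interior. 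Next, on the preimage of $\R^2 \setminus [-C,C]^2$ I apply the twisting substitution $\bar{z}(s,t) := (\phi_h^t)^{-1}(\pi(u(s,t)))$. Combining the splitting (\ref{cob1}) with the compatibility of $\tilde{J}$ with the projection (i.e.\ $\pi$ is $(\tilde{J}_t, (\phi_h^t)_* i)$-holomorphic on the ends) gives, after a direct computation, that $\bar{z}$ satisfies the unperturbed Cauchy--Riemann equation $\partial_s \bar{z} + i\partial_t \bar{z} = 0$. The boundary conditions become horizontal lines, since $\pi(V)$ is a union of horizontal rays on the ends and $\phi_h^{-t}$ acts there as a pure vertical translation. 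Applying the maximum principle (or Schwarz reflection across the horizontal Lagrangian boundaries) to the harmonic components of $\bar{z}$, together with the bounded asymptotes at $s = \pm\infty$, then yields uniform bounds on $\bar{z}$ and hence on $z = \pi \circ u$. The standard $M$-side confinement (using closedness of $M$ or convexity at infinity) completes the argument.

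Once confinement is in hand, transversality follows from a standard Sard--Smale argument: $\tilde{J} \in \tilde{\mathcal{J}}_{\mathfrak{h}}$ is unconstrained on $[-C,C]^2 \times M$, the region where all trajectories now live, so one obtains a Baire-generic subset of $\tilde{\mathcal{J}}_{\mathfrak{h}}$ on which the relevant moduli spaces of Floer trajectories between transverse $1$-chords are cut out transversally. The continuation part is handled similarly: the convexity of $\tilde{\mathcal{H}}_{\mathfrak{h}}$ and the end-conditions defining $\tilde{\mathcal{J}}_{\mathfrak{h}}$ are preserved under homotopies, and the confinement proof above is robust enough to apply to the $s$-dependent Floer equation with the same conclusion, since the maximum-principle estimates depend only on the data at infinity, which is common to every element of the homotopy. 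The induced continuation chain map is then a quasi-isomorphism, and independence of the homotopy follows from the standard two-parameter chain-homotopy construction.

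The main obstacle is the maximum-principle step in the confinement argument. The subtlety is that $h$ is generally non-trivial on the ends, so one cannot simply project and apply a classical monotonicity or isoperimetric estimate; the conditions defining $\mathfrak{h}$ are designed precisely so that the twisting trick transforms the Floer equation into an honest holomorphic one with horizontal Lagrangian boundary conditions, for which the maximum principle applies cleanly. This is essentially the Biran--Cornea strategy from \cite{BiranCornea13}, \cite{BiranCornea14}, adapted to accommodate the linear (rather than zero) behaviour of $h$ on the ends.
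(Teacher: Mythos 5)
Your proposal is correct and follows essentially the same route as the paper: the paper's proof simply defers to the compactness and transversality arguments of Biran--Cornea, noting that the confinement argument is the one carried out for Proposition \ref{propcob2}, i.e.\ exactly your twisting substitution using conditions (i)--(iv) on $h$ and the $(\tilde{J}_t,(\phi_h^t)_*i)$-holomorphicity of $\pi$, so that the projection becomes holomorphic outside $[-C,C]^2\times M$ with boundary on horizontal (shifted) rays, after which a complex-analytic argument plus the asymptotics to chords in $[-C,C]^2\times M$ gives confinement. The only cosmetic difference is that you close the argument with the maximum principle/Schwarz reflection while the paper (following Lemma 3.3.2 of \cite{BiranCornea14}) invokes the open mapping theorem and conformal properties of holomorphic maps; the transversality and continuation statements are handled by the standard machinery in both cases.
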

\begin{proof}
	This follows immediately from the compactness and transversality arguments carried out in \cite{BiranCornea13} and \cite{BiranCornea14}. In fact the compactness argument runs analogously to the one carried out in the proof of Proposition \ref{propcob2} below.
\end{proof}
As usual we will identify all Floer homology groups via the canonical isomorphisms induced by continuation maps. In this way we obtain an abstract Floer homology group which we denote by $HF_*(V,\mathfrak{h})$. 

As explained in Section \ref{SecLagInv} the main structure needed to extract spectral invariants from homology groups is an $\R$-filtration. Given $a \in \R$ and a regular Floer datum $(\tilde{H},\tilde{J})\in \tilde{\mathcal{H}}_{\mathfrak{h}}\times \tilde{\mathcal{J}}_{\mathfrak{h}}$, we denote by $(CF_*^{a}(\tilde{H},\tilde{J}:V),d)$ the Floer chain complex generated by critical points of the action functional $\mathcal{A}_{\tilde{H}:V}$ whose action is $<a$. This is a well-defined chain complex because the Floer differential $d$ is action decreasing. We denote by $HF_*^{a}(\tilde{H},\tilde{J}:V)$ its homology and by $\iota^{a}_*:HF_*^{a}(\tilde{H},\tilde{J}:V)\to HF_*(\tilde{H},\tilde{J}:V)$ the map induced by the inclusion $\iota^{a}:(CF_*^{a}(\tilde{H},\tilde{J}:V),d)\hookrightarrow (CF_*(\tilde{H},\tilde{J}:V),d)$.


\subsubsection{The PSS isomorphism}
Suppose we are given a Lagrangian $V\subset (\tilde{M},\tilde{\omega})$ with cylindrical ends as above together with a regular quantum datum $\mathcal{D}=(\tilde{f},\tilde{\rho},\tilde{J}')$ \emph{adapted to the exit region} $S=\partial V$ in the sense of Section 3 in \cite{Singer15}. Here $(\tilde{f},\tilde{\rho})$ denotes a Morse-Smale pair on $V$ satisfying additional conditions as in \cite{Singer15}. In particular $\tilde{f}$ is required to be split on
\[
\left(\bigsqcup_{i=0}^{k_-}(-\infty,-R+\delta]\times \{a^-_i\}\times L_i \right)\cup \left( \bigsqcup_{j=0}^{k_+}[R-\delta,\infty)\times \{a^+_j\}\times L'_j\right)
\]
for some small $\delta > 0$ and $-\nabla^{\tilde{\rho}}\tilde{f}$ is required to point outwards along $\partial V|_{[-R,R]^2}$. Also, $\tilde{J}'$ denotes a generic almost complex structure on $\tilde{M}$ satisfying the condition that $\pi:\tilde{M}\to \R^2$ restricts to a $(\tilde{J}',i)$-holomorphic function on $(\R^2 \backslash [-R+\delta,R-\delta]^2) \times M$. As showed in \cite{Singer15} and \cite{BiranCornea13} the quantum chain complex $(QC_*(\mathcal{D}:V,\partial V),d)$ is then an honest chain complex whose homology $QH_*(\mathcal{D}:V,\partial V)$ is independent of the choice of regular quantum datum $\mathcal{D}$. 

We now fix a choice of perturbation function $h$ for $V$ and require it satisfy the following condition, which is identical to the one used in Section 5.2 of \cite{BiranCornea13}.
\begin{equation}
	\label{cob7}
	\begin{array}{c}
	 \text{For every $j\in \{1,\ldots ,k_{\pm}\}$ the constant $\alpha_j^{\pm}$ in} \\ \text{(\ref{cob5}) is required to satisfy $\pm \alpha_j^{\pm}<0$.}  
	\end{array}
\end{equation}
Denote by $\mathfrak{h}$ the corresponding class of perturbation functions. It was discovered in \cite{BiranCornea13} (see also Remark 3.5.1. in \cite{BiranCornea14}) that this specific choice of class implies that there is a PSS-type isomorphism $QH_*(V,\partial V) \cong HF_*(V,\mathfrak{h})$. Fixing a regular Floer datum $(\tilde{H},\tilde{J})\in \tilde{\mathcal{H}}_{\mathfrak{h}}\times \tilde{\mathcal{J}}_{\mathfrak{h}}$ we will now point out how this isomorphism adapts to our setup. More precisely, we will define chain maps
\begin{align}
\label{cob2}
\PSS_+:QC_*(\mathcal{D}: V,\partial V)\to CF_*(\tilde{H},\tilde{J}:V)\\
\label{cob3}
\PSS_-:CF_*(\tilde{H},\tilde{J}:V)\to QC_*(\mathcal{D}: V,\partial V)
\end{align}
which, at the level of homology, are inverse to each other and induce a canonical isomorphism $QH_*(V,\partial V)\cong HF_*(V,\mathfrak{h})$. In the standard case of closed monotone Lagrangians of closed symplectic manifolds this was carried out in \cite{BiranCornea07}. Moreover, the construction is described in great detail in \cite{Zapolsky15}. 

We first introduce some notation. Define $Z:=\R \times [0,1]$ and $Z_{\pm}:=\{(s,t)\in Z\ | \ \pm s \geq 0\}$, viewed as subsets of $\R^2 \approx \C$. We will think of $D^2=\{z\in \C \ |\ |z|\leq 1\}$ as a Riemann surface with boundary equipped with the conformal structure it inherits from $\C$. Define also $D_{\pm}:=D^2 \backslash \{\pm 1\}$ where we view $\pm 1$ as a positive $(+)$, repectively negative $(-)$, boundary puncture in the sense of \cite{Zapolsky15} (see also \cite{Seidel08}) and equip the punctures with the standard strip-like ends $\epsilon_{\pm}:Z_{\pm}\to D_{\pm}$ given by
\[
\epsilon_{\pm}(z)=\frac{e^{\pi z}-i}{e^{\pi z}+i}, \quad z\in Z_{\pm}.
\]
Choose once and for all two functions $a_{\pm}\in C^{\infty}(D_{\pm},[0,1])$ satisfying the following conditions:
\begin{enumerate}[(ii)]
\item
$a_{\pm}(z)=0$ whenever $z \notin \image(\epsilon_{\pm})$ or $z=\epsilon_{\pm}(s,t)$ for $t=0$ and/or $\pm s\leq 1$.
\item
$a_{\pm}(\epsilon_{\pm}(s,t))=t$ whenever $\pm s\geq 2$.
\item
$\pm \partial_s(a_{\pm}\circ \epsilon_{\pm})(s,1)>0$ whenever $1<\pm s<2$.
\end{enumerate} 
Following \cite{BiranCornea14} we consider now a specific type of perturbation data $(\tilde{K}^{\pm},\tilde{I}^{\pm})$ on $D_{\pm}$, compatible with the Floer data $(\tilde{H},\tilde{J})$. That is, we will consider pairs $(\tilde{K}^{\pm},\tilde{I}^{\pm})$ where $\tilde{K}^{\pm}\in \Omega^1(D_{\pm},C^{\infty}(\tilde{M}))$ is a 1-form on $D_{\pm}$ with values in $C^{\infty}(\tilde{M})$ and $(\tilde{I}^{\pm}_z)_{z\in D_{\pm}}$ is a family of $\tilde{\omega}$-compatible almost complex structures on $\tilde{M}$. The specific requirements we make are as follows.
\begin{enumerate}[(ii)]
	\item
	Globally (on all of $D_{\pm}$) we have $\tilde{K}^{\pm}=da_{\pm}\otimes \tilde{h} + k_{\pm}$ where $\tilde{h}:=h\circ \pi$ for some $h\in \mathfrak{h}$ and each of the other ingredients are required to satisfy 
	\begin{enumerate}[(b)]
		\item
		$\epsilon_{\pm}^*\tilde{K}^{\pm}=\tilde{H}dt$ on $\{(s,t)\in Z_{\pm}\ |\ \pm s\geq 2\}$.
		\item
		$\tilde{K}^{\pm}=0$ on $D_{\pm}\backslash \image(\epsilon_{\pm})$ and $\epsilon_{\pm}^*\tilde{K}^{\pm}=0$ on $\{(s,t)\in Z_{\pm}\ |\ \pm s\leq 1\}$.
		\item
		$k_{\pm}(\xi)=0$ for all $\xi \in T\partial D_{\pm}$.
		\item
		For $C= R+1$ as in the previous subsection we have $d\pi(X_{k_{\pm}})=0$ on $(\R^2 \backslash [-C,C]^2)\times M$.
	\end{enumerate}
	\item
	$\tilde{I}^{\pm}_z=\tilde{J}'$ for all 
	$z=\epsilon_{\pm}(s,t)$ with $\pm s<1$.
	\item
	$\tilde{I}^{\pm}_z=\tilde{J}_t$ for all $z=\epsilon_{\pm}(s,t)$ with $\pm s>2$.
	\item
	$\tilde{I}^{\pm}$ satisfies the condition, that the restriction of $\pi:\tilde{M}\to \R^2$ to $(\R^2 \backslash [-C,C]^2)\times M$ is $(I^{\pm}_z,(\phi_h^{a_{\pm}(z)})_*i)$-holomorphic for all $z\in D_{\pm}$.
\end{enumerate}
We will call a perturbation datum $(\tilde{K}^{\pm},\tilde{I}^{\pm})$ satisying these specified criteria a \emph{$\PSS$-admissible perturbation datum}.
Having chosen $(\tilde{K}^{\pm},\tilde{I}^{\pm})$ we consider solutions $u_{\pm}\in C^{\infty}(D_{\pm},\R^2 \times M)$ of
\begin{equation}
\label{cob4}
\left\{
\begin{array}{l}
d_zu_{\pm}+\tilde{I}^{\pm}_z \circ d_zu_{\pm} \circ i= X_{\tilde{K}^{\pm}}+\tilde{I}^{\pm}_z \circ X_{\tilde{K}^{\pm}} \circ i\\
u_{\pm}(\partial D_{\pm})\subset V,
\end{array}
\right.
\end{equation}
where for $\xi \in T_zD_{\pm}$ the term $X_{\tilde{K}^{\pm}(\xi)}$ denotes the Hamiltonian vector field of the autonomous Hamiltonian $\tilde{K}^{\pm}(\xi)\in C^{\infty}(\tilde{M})$. The following compactness result is a small adaption of the compactness argument appearing in \cite{BiranCornea14}.
\begin{proposition}
\label{propcob2}
	Let $u_+\in C^{\infty}(D_{+},\tilde{M})$ be a solution of the "$+$"-case of (\ref{cob4}) and let $u_-\in C^{\infty}(D_{-},\tilde{M})$ be a solution of the "$-$"-case of (\ref{cob4}) satisfying the condition 
	\begin{equation}
	\label{cob8}
	u_-(1)\in [-C,C]^2\times M.
	\end{equation}
	Moreover, suppose both $u_{\pm}$ have finite energy. Then $u_{\pm}(D_{\pm})\subset [-C,C]^2\times M$.
\end{proposition}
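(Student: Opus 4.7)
The plan is to follow the Biran--Cornea scheme: project the problem to $\R^2$ via $\pi$, ``untwist'' by the Hamiltonian isotopy generated by $h$ to obtain an honest $i$-holomorphic map into $\R^2\setminus[-C,C]^2$, and then exploit its boundary and asymptotic data to force that map to be essentially trivial. This mirrors the compactness arguments in \cite{BiranCornea13,BiranCornea14}.

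First I would consider $\Omega_\pm:=(\pi\circ u_\pm)^{-1}(\R^2\setminus[-C,C]^2)\subseteq D_\pm$, which is open, and define $v_\pm(z):=\phi_h^{-a_\pm(z)}(\pi(u_\pm(z)))$ on $\Omega_\pm$. Property~(iv) of $h$ (invariance of $[-C,C]^2$ under $\phi_h^t$ for $|t|\leq 1$) ensures that $v_\pm$ still takes values in $\R^2\setminus[-C,C]^2$. I would then verify that $v_\pm$ is $i$-holomorphic on $\Omega_\pm$ by a direct calculation using the decomposition $\tilde K^\pm=da_\pm\otimes\tilde h+k_\pm$ with $\tilde h=h\circ\pi$, property~(d) of $\tilde K^\pm$ (namely $d\pi(X_{k_\pm})=0$ outside $[-C,C]^2\times M$), property~(iv) of $\tilde I^\pm$ (twisted holomorphicity of $\pi$ outside $[-C,C]^2\times M$), and the invariance $(\phi_h^t)_*X_h=X_h$; the combination precisely cancels the Hamiltonian twist and yields $dv_\pm+i\circ dv_\pm\circ i=0$.

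Next I would argue that $\Omega_\pm$ is relatively compact in $D_\pm$. By finite energy, $u_\pm$ converges at the strip-like end $\epsilon_\pm$ to a Hamiltonian $1$-chord of $\tilde H=h+H$ with endpoints on $V$; the condition $\alpha_j^\pm\neq 0$ in property~(iii) of $h$, together with the pairwise disjointness of the cylindrical ends of $V$, precludes such a chord from being supported in any cylindrical end, so each such chord projects into $[-R,R]^2\subset[-C,C]^2$. Hence $\Omega_\pm$ stays away from the puncture. For $u_-$, the hypothesis $u_-(1)\in[-C,C]^2\times M$ together with continuity of $u_-$ shows that $\Omega_-$ also stays away from $1\in\partial D_-$, so $\overline{\Omega_\pm}$ is a compact subset of $D_\pm$. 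A parallel analysis of the boundary shows: at interior boundary points of $\Omega_\pm$ one has $v_\pm\in\partial[-C,C]^2$ (this set being $\phi_h^t$-invariant), while at points of $\overline{\Omega_\pm}\cap\partial D_\pm$, $u_\pm$ is on a cylindrical end of $V$, so $\pi\circ u_\pm$ lies on one of the horizontal rays $[R,\infty)\times\{a_j^+\}$ or $(-\infty,-R]\times\{a_j^-\}$, and property~(iii) places $v_\pm$ in the bounded half-strip $T_j^\pm$.

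Finally, assuming $\Omega_\pm\neq\emptyset$, I would derive a contradiction from the $i$-holomorphic map $v_\pm\colon\Omega_0\to\R^2\setminus[-C,C]^2$ on a connected component $\Omega_0$ with boundary values in $\partial[-C,C]^2\cup\bigsqcup_j T_j^\pm$. A maximum-principle argument applied to a subharmonic function on $\R^2$ adapted to this admissible set, combined with connectedness, forces $v_\pm(\Omega_0)$ into a single strip $T_j^\pm$; a further argument combining the open mapping theorem with the presence of interior boundary in $\partial[-C,C]^2\cap T_j^\pm$ then produces the contradiction. The principal obstacle is this last maximum-principle step, which requires choosing the auxiliary subharmonic test function carefully so that its boundary values are controlled on both the interior and the $\partial D_\pm$ parts of $\partial\Omega_0$; this is the direct analogue of the planar compactness estimate carried out in \cite{BiranCornea13,BiranCornea14}, and my plan is to transcribe that estimate into the present setting essentially verbatim.
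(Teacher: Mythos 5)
Your overall scheme is the paper's: untwist by the Hamiltonian isotopy of $h$ (the paper conjugates upstairs by $\phi_{\tilde h}^{a_\pm(z)}$ and then projects, you project and conjugate downstairs -- the same computation), observe that the resulting planar map is $i$-holomorphic wherever it leaves $[-C,C]^2$ (using property (iv) of $h$, condition (d) on $k_\pm$ and the twisted holomorphicity of $\pi$), note that finite energy forces convergence at the puncture to an $X_{\tilde H}$-chord, and that no such chords live over the ends because $\alpha_j^\pm\neq 0$ there (a small slip: such chords lie over $[-C,C]^2$, not necessarily over $[-R,R]^2$, since $\tilde H$ is unconstrained over $Y\subset(-C,C)^2$ and $h$ need not be linear on $[R,C]\times\{a_j^\pm\}$). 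Up to this point you are following the paper's proof.

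The genuine gap is in the last step, and it shows up in how you handle hypothesis (\ref{cob8}). You invoke $u_-(1)\in[-C,C]^2\times M$ only to conclude that $\overline{\Omega_-}$ is compact in $D_-$; but $1$ is not a puncture of $D_-=D^2\setminus\{-1\}$, so precompactness of $\Omega_\pm$ needs only the asymptotic convergence at the puncture $\pm 1$, and in any case continuity of $u_-$ at $1$ does not keep $\Omega_-$ away from $1$ when $\pi(u_-(1))\in\partial([-C,C]^2)$. After that, your concluding maximum-principle/open-mapping argument is formulated completely symmetrically in the two cases, with boundary values on $\partial[-C,C]^2$ and on the horizontal rays; if such a symmetric argument were available, the hypothesis (\ref{cob8}) would be superfluous, whereas it is imposed precisely because the two half-disc problems are not symmetric (the marked point $\mp 1$ carries no constraint from the equation or the boundary condition, and the paper immediately explains why (\ref{cob8}) is harmless in the application: $-\nabla^{\tilde\rho}\tilde f$ points outwards along $\partial V|_{[-R,R]^2}$, so only solutions satisfying (\ref{cob8}) are relevant for $\PSS_-$). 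The planar estimate you propose to transcribe "essentially verbatim" from \cite{BiranCornea13}, \cite{BiranCornea14} is exactly the place where the directional data enter -- the monotone moving boundary (condition (iii) on $a_\pm$ along the $t=1$ boundary) interacting with the linear behaviour of $h$ on $T_j^\pm$, and, in the "$-$"-case, the constraint (\ref{cob8}) at the marked point -- and a correct write-up must identify, for each component of the escape region, whether the contradiction comes from the asymptotic chord at the puncture or from the condition at $z=1$. As sketched, your argument would "prove" the "$-$"-case without using (\ref{cob8}) at all, so it cannot be complete; this final step needs to be carried out, not only cited.
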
 
\begin{proof}
	The argument is the same for the two cases, so we only consider $u:=u_+:D_+\to \tilde{M}$. First note that, since $u$ has finite energy, $u\circ \epsilon_+(s,t)$ converges to a Hamiltonian chord of $\tilde{H}$ connecting $V$ to itself when $s\to \infty$. Since $\tilde{H}\in \tilde{\mathcal{H}}_{\mathfrak{h}}$ all such chords are contained in $[-C,C]^2\times M$. Now define $\tilde{h}:=h\circ \pi :\tilde{M}\to \R$ for some $h\in \mathfrak{h}$ and consider the map $v\in C^{\infty}(D_+,\tilde{M})$ defined by the equation $u(z)=\phi_{\tilde{h}}^{a_+(z)}(v(z)),\ z\in D_+$. Differentiation reveals that $v$ satisfies 
	\[
	d_zv+\tilde{I}'_z \circ d_zv \circ i= Y+\tilde{I}'_z \circ Y \circ i,
	\]
	where $X_{\tilde{K^{+}}}=d\phi_{\tilde{h}}^{a_+(z)}(Y)+da_+ \otimes X_{\tilde{h}}$ and $\tilde{I}^+_z=(\phi_{\tilde{h}}^{a_+(z)})_*\tilde{I}'_z$. Moreover, $v$ satisfies the "moving boundary condition"
	\[
	v(z)\in (\phi_{\tilde{h}}^{a_+(z)})^{-1}(V) \quad \forall \ z\in \partial D_+. 
	\]
	Note that it follows from the requirements of $\tilde{K}^{\pm}$ and $\tilde{I}^{\pm}$ that, outside the compact subset\footnote{Here the last condition imposed on our perturbation function $h$ is crucial.} 
	\begin{equation*}
		U:=[-C,C]^2\times M=\bigcup_{t\in [0,1]}(\phi_{\tilde{h}}^t)^{-1}([-C,C]^2\times M) \subset \tilde{M},
	\end{equation*}
	we have $\tilde{I}'_z=i\oplus J_z$ for some almost complex structure $J_z$ on $M$ and $d_{v(z)}\pi(Y)=0$ for all $z\in D_+$. In particular $\tilde{v}:=\pi \circ v:D_+\to \C$ restricts to a holomorphic function on $v^{-1}(\tilde{M}\backslash U)$. It then follows, using the open mapping theorem from complex analysis and the conformal properties of holomorphic maps as in the proof of Lemma 3.3.2 of \cite{BiranCornea14}, that the assumption $\tilde{v}(D_+)\cap (\R^2 \backslash [-C,C]) \neq \emptyset$ contradicts the convergence statement made in the beginning of the proof. Hence $\tilde{v}(\R\times [0,1])\subset [-C,C]^2$. Since $\phi_h^t$ preserves $[-C,C]^2$ for all $t\in [-1,1]$ the statement follows.
\end{proof}

We note that, since $-\nabla^{\tilde{\rho}}\tilde{f}$ points outwards along $\partial V|_{[-R,R]^2}$, the only relevant solutions of the "$-$"-case of (\ref{cob4}) for defining $\PSS$ are those satisfying (\ref{cob8}). Transversality issues and energy estimates for moduli spaces of such solutions are dealt with in \cite{BiranCornea14}. With these observations at hand we can define (\ref{cob2}) and (\ref{cob3}) exactly as in \cite{BiranCornea07} or \cite{Zapolsky15}, to which we refer for details. We recall that (\ref{cob2}) is defined "by counting" rigid constellations of pearly trajectories and finite energy solutions $u_+$ of (\ref{cob4}) subject to the condition that the pearly trajectory "ends" at $u_+(-1)$. (\ref{cob3}) is defined similarly. 

Following \cite{Zapolsky15} one now checks that the homology isomorphism $\PSS:QH_*(\mathcal{D}: V;\partial V)\to HF_*(\tilde{H},\tilde{J}:V)$ is independent of the chosen data and that it respects continuation isomorphisms. Moreover in \cite{Singer15} it is shown that $QH_*(V,\partial V)$ is a unital algebra, and by the standard arguments we have a canonical isomorphism $\PSS:QH_*(V,\partial V)\to HF_*(V,\mathfrak{h})$ of unital algebras.
\begin{remark}
It is important to note that the specific requirement (\ref{cob7}) imposed on the elements in $\mathfrak{h}$ in order for the PSS map $QH_*(V,\partial V)\cong HF_*(V,\mathfrak{h})$ to exist is closely connected with the definition of $QH_*(V,\partial V)$. To see the connection we suggest the curious reader take a look at the proof of Proposition 5.2 in \cite{BiranCornea13}.
\end{remark}
\begin{remark}
Note that a consequence of the above discussion is that for any two choices of perturbation functions $h^{\pm}$ satisfying (\ref{cob7}), but are in distinct classes $h^-\in \mathfrak{h}^-$, $h^+\in \mathfrak{h}^+$ there is a natural isomorphism $HF_*(V,\mathfrak{h}^-)\cong QH_*(V,\partial V) \cong HF_*(V,\mathfrak{h}^+)$ provided by $\PSS$.
\end{remark}


\subsubsection{Spectral invariants for Lagrangians with cylindrical ends}
We will apply the machinery developed in \cite{LeclercqZapolsky15} to Lagrangians with cylindrical ends. The translation to our setup is more or less immediate and we will only need a minimum of properties developed there, so we will here only mention the details needed to carry those properties over to our setup. Let $h$ be a choice of perturbation function satisfying (\ref{cob7}) and $\mathfrak{h}$ the corresponding perturbation function class. If $(\tilde{H},\tilde{J})\in \tilde{\mathcal{H}}_{\mathfrak{h}}\times \tilde{\mathcal{J}}_{\mathfrak{h}}$ is a regular Floer datum and $\alpha \in QH_*(V,\partial V)$ we define
\begin{equation}
\label{cob9}
l(\alpha, \tilde{H},\tilde{J}):=\inf\{a \in \R \ | \ \PSS(\alpha)\in \image(\iota^{a}_*)\subset HF_*(\tilde{H},\tilde{J}:V) \}.
\end{equation}
which is an element of $\R \cup \{-\infty\}$. Here $\iota^{a}_*$ denotes the map on homology induced by the natural map $\iota^{a}:CF_*^{a}(\tilde{H},\tilde{J}:V)\to CF_*(\tilde{H},\tilde{J}:V)$. It is immediate that $l(0,\tilde{H},\tilde{J})=-\infty$. For $\alpha \neq 0$ an argument from\footnote{A different setup is considered in the reference, but the argument carries over to our case \emph{mutatis mutandis}.} \cite{LeclercqZapolsky15} shows that the existence of continuation isomorphisms implies that
\begin{equation}
\label{cob11}
\int_0^1 \min_{\tilde{M}}(\tilde{H}_t^- - \tilde{H}_t^+)dt\leq  l(\alpha,\tilde{H}^-,\tilde{J}^-) -l(\alpha, \tilde{H}^+,\tilde{J}^+) \leq \int_0^1 \max_{\tilde{M}}(\tilde{H}_t^- - \tilde{H}_t^+)dt
\end{equation}
for any two regular Floer data $(\tilde{H}^-,\tilde{J}^-),(\tilde{H}^+,\tilde{J}^+)\in \tilde{\mathcal{H}}_{\mathfrak{h}}\times \tilde{\mathcal{J}}_{\mathfrak{h}}$.
In particular it follows that (\ref{cob9}) does not depend on the specific choice of compatible almost complex structure $\tilde{J}$. We therefore write $l(\alpha, \tilde{H})=l(\alpha, \tilde{H},\tilde{J})$. When we want to emphasize that $l$ is associated to the relative quantum homology $QH_*(V,\partial V)$ we write $l_{(V,\partial V)}(\alpha, \tilde{H})=l(\alpha, \tilde{H})$. Moreover, it follows from (\ref{cob11}) and genericity of non-degenerate Floer data that $l_{(V,\partial V)}$ extends by continuity to a function
 \begin{equation}
 \label{cob10}
 l_{(V,\partial V)}:QH_*(V,\partial V)\times \tilde{\mathcal{H}}_{\mathfrak{h}}\to \R \cup \{-\infty\}
 \end{equation}
 satisfying $l_{(V,\partial V)}(\alpha,\tilde{H})=-\infty$ if and only if $\alpha=0\in QH_*(V,\partial V)$.


\subsection{Proof of Theorem \ref{lemcob1}}
For the convenience of the reader the proof is split into several steps. We make use of the notation from the statement of the theorem.

\noindent
\textit{Step 1: The definition of $\Phi_V$}. 
We briefly recall the definition of the canonical restriction map $j':QH_*(V,\partial V)\to QH_{*-1}(L')$. For details we refer to Section 9 in \cite{Singer15}. Fix $R>0$ such that
\[
V|_{\R^2 \backslash [-R,R]^2}= \left((-\infty,-R]\times \{0\}\times L\right)\cup \left( [R,\infty)\times \{0\}\times L'\right).
\]
The Morse function $\tilde{f}\in C^{\infty}(V)$ in the regular quantum datum $\mathcal{D}=(\tilde{f},\tilde{\rho},\tilde{J}')$ for $QH_*(V,\partial V)$ which we consider is required to satisfy the following condition.
\begin{equation*}
\tilde{f}(t,0,p)=f^+(p) +\sigma^+(t) \quad \forall \ (t,p)\in [R,R+1]\times L'
\end{equation*} 
where $\sigma^+:[R,R+1]\to \R$ has a unique maximum at $R+\tfrac{1}{2}$ and $f^+\in C^{\infty}(L')$ is Morse. Moreover, on $[R,R+1]\times L'$ the Riemannian metric $\tilde{\rho}$ is given by $\rho \oplus \rho^+$ for some metric $\rho$ on $[R,R+1]$ and some metric $\rho^+$ on $L'$ just as well as $\tilde{J}'=i\oplus J'$ outside $[-R,R]^2\times M$ for some generic $\omega$-compatible almost complex structure $J'$ on $M$. In this setup the quotient map
\[
QC_*(\mathcal{D}: V,\partial V) \to QC_{*-1}(\mathcal{D}': L'),
\]
where $\mathcal{D}'=(f^+,\rho^+,J')$, is a chain map. The map induced on homology is exactly the map $j'$. Of course there is similarly a map $j:QH_*(V,\partial V)\to QH_{*-1}(L)$. By Theorem 2.2.2 in \cite{BiranCornea13} $V$ is a quantum h-cobordism. From Lemma 5.1.2. in the same paper it now follows that both $j$ and $j'$ are isomorphisms. That they also respect multiplication is shown in \cite{Singer15}, Theorem 1.2. By definition $\Phi_V=j'\circ j^{-1}$. The estimate in Theorem \ref{lemcob1} is therefore equivalent to the estimate
\begin{equation}
\label{cob25}
|l_L(j(\alpha),\phi)-l_{L'}(j'(\alpha),\phi)|\leq \mathcal{S}(V),
\end{equation}
for all $\alpha \in QH_*(V,\partial V)\backslash \{0\}$ and all $\phi \in \widetilde{\Ham}(M,\omega)$.
 
\noindent
\textit{Step 2: Adapting $V$.} Our strategy is based on the following trick from \cite{CorneaShelukhin15} which replaces $V$ by a new elementary Lagrangian cobordism $V':L' \rightsquigarrow L$. Fix once and for all a small $\tilde{\epsilon}>0$. Given $\psi \in \Symp(\R^2,\omega_{\R^2})$ we define $\tilde{\psi}:=\psi \times \id \in \Symp(\tilde{M},\tilde{\omega})$. We choose a $\psi$ such that every point outside $[-R,R]\times \R$ is fixed and such that $V':=\tilde{\psi}(V)$ satisfies 
\begin{equation}
\label{cob30}
\pi(V')\subset \{(x,y)\in \R^2 \ |\ 0\leq y\leq \beta(x)\},
\end{equation} 
where $\beta \in C^{\infty}_c(\R,[0,\infty))$ satisfies $\supp(\beta)\subset (-R,R)$ and  
\begin{equation}
\label{cob31}
\int_{-\infty}^{\infty}\beta(t)\ dt \leq \mathcal{S}(V')+\tilde{\epsilon}.
\end{equation}
The existence of such $\psi$ and $\beta$ is quite obvious. Moreover, the construction implies that $V':L' \rightsquigarrow L$ is an elementary Lagrangian cobordism satisfying $\mathcal{S}(V)=\mathcal{S}(V')$ and $\Phi_V=\Phi_{V'}$.

\noindent
\textit{Step 3: Constructing suitable extensions of $H$.} We first fix a perturbation function $h$ satisfying the following criteria (here we use the notation from the first conditions (i)-(iv) in Section \ref{HFcylends})
\begin{itemize}
	\item
	$\supp(h)\subset V_0^- \cup V_0^+$ and $\partial_yh(x,y)=0$ for all $(x,y)$ satisfying $|y|<\tfrac{\epsilon}{2}$.
	\item
	$h$ must satisfy (\ref{cob7}) as well as   
	\begin{align*}
	\partial_xh(x,0)&\geq 0 \quad \forall \ x\leq - R \\
	\partial_xh(x,0)&\leq 0 \quad \forall \ x\geq R.
	\end{align*}  
	Moreover, we require these inequalities be strict for $x<-(R+\tfrac{1}{3})$ and $x>R+\tfrac{1}{3}$ respectively.
	\item
	Lastly, we require $|h(-(R+\tfrac{1}{2}),0)|,|h(R+\tfrac{1}{2},0)|\leq \tilde{\epsilon}$. 
\end{itemize}
Denote by $\mathfrak{h}$ the class corresponding to $h$. Fix now $0<\delta <\! \! < \tilde{\epsilon}$ and choose a cut-off $b\in C^{\infty}_c(\R,[0,\delta])$ such that
\begin{equation*}
b=
\left\{
\begin{array}{ll}
\delta, & \text{on}\ [-R-\tfrac{1}{2},R+\tfrac{1}{2}] \\
0, & \text{on}\ \R \backslash (-C+\tfrac{1}{5},C-\tfrac{1}{5}).
\end{array}
\right.
\end{equation*}  
Define the constant $K:=\int_{-R-1}^{R+1}\left(b(t)+\beta(t) \right)\ dt$ which for small enough choice of $\delta$ satisfies $K\leq \mathcal{S}(V')+2\tilde{\epsilon}$. We now define 3 auxiliary functions as follows. We denote by $\rho \in C^{\infty}_c(\R,[0,1])$ a cut-off satisfying  
\begin{equation*}
\rho=
\left\{
\begin{array}{ll}
1, & \text{on}\ [-R,R] \\
0, & \text{on}\ \R \backslash (-(R+\tfrac{1}{2}),R+\tfrac{1}{2}).
\end{array}
\right.
\end{equation*}
By $\eta_-,\eta_+ \in C^{\infty}(\R,[0,1])$ we denote \emph{monotone} functions satisfying
\begin{equation*}
\eta_-=
\left\{
\begin{array}{ll}
0, & \text{on}\ (-\infty,-R-\tfrac{4}{5}] \\
-1, & \text{on}\ [-R-\tfrac{2}{3},\infty),
\end{array}
\right.
\quad
\&
\quad 
\eta_+=
\left\{
\begin{array}{ll}
1, & \text{on}\ (-\infty,R+\tfrac{2}{3}] \\
0, & \text{on}\ [R+\tfrac{4}{5},\infty),
\end{array}
\right.
\end{equation*}
as well as $\max_{\R}|\eta_-'|,\max_{\R}|\eta_+'|\leq 10$. Using this auxiliary data we can finally define two specific perturbation functions $h_-,h_+\in \mathfrak{h}$ by
\begin{align*}
	h_+(x,y)&=\left(\int_{-R-1}^{x}(b(t)+\beta(t))\ dt\right)\eta_+(x)\rho(y) +h(x,y)\\
	h_-(x,y)&=\left(\int_{-R-1}^{x}(b(t)+\beta(t))\ dt-K \right)\eta_-(x)\rho(y) +h(x,y),
\end{align*}
where $(x,y)\in \R^2$. We have constructed $h_\pm$ such that they satisfy the following properties. Assuming our data is chosen carefully we achieve
\begin{equation*}
\phi_{h_-}^1(\pi(V'))\cap \pi(V')=\{(-R-\tfrac{1}{2},0)\} \quad \& \quad \phi_{h_+}^1(\pi(V'))\cap \pi(V')=\{(R+\tfrac{1}{2},0)\}.
\end{equation*}
Moreover, $x\mapsto h_-(x,0)$ has a local non-degenerate maximum at $x=-(R+\tfrac{1}{2})$ and $x\mapsto h_+(x,0)$ has a local non-degenerate maximum at $x=R+\tfrac{1}{2}$. Given any Hamiltonian $H\in C_c^{\infty}([0,1]\times M)$ we define associated Hamiltonians $\tilde{H}^-,\tilde{H}^+ \in \tilde{\mathcal{H}}_{\mathfrak{h}}$ by
\begin{align*}
	\tilde{H}^-_t(z,p)=H_t(p)+h_-(z) \quad \& \quad \tilde{H}^+_t(z,p)=H_t(p)+h_+(z),
\end{align*}
for $(t,z,p)\in [0,1]\times \R^2 \times M$. For future use we denote $c_{\pm}:=h_{\pm}(\pm (R+\tfrac{1}{2}))$. Note that we can estimate
\begin{equation}
\label{cob40}
	|c_--c_+|\leq 5\tilde{\epsilon}.
\end{equation}	

\begin{figure}[htbp]
\centering
\includegraphics[scale=0.9, clip=true, trim=3.4cm 23cm 4.7cm 2.8cm]{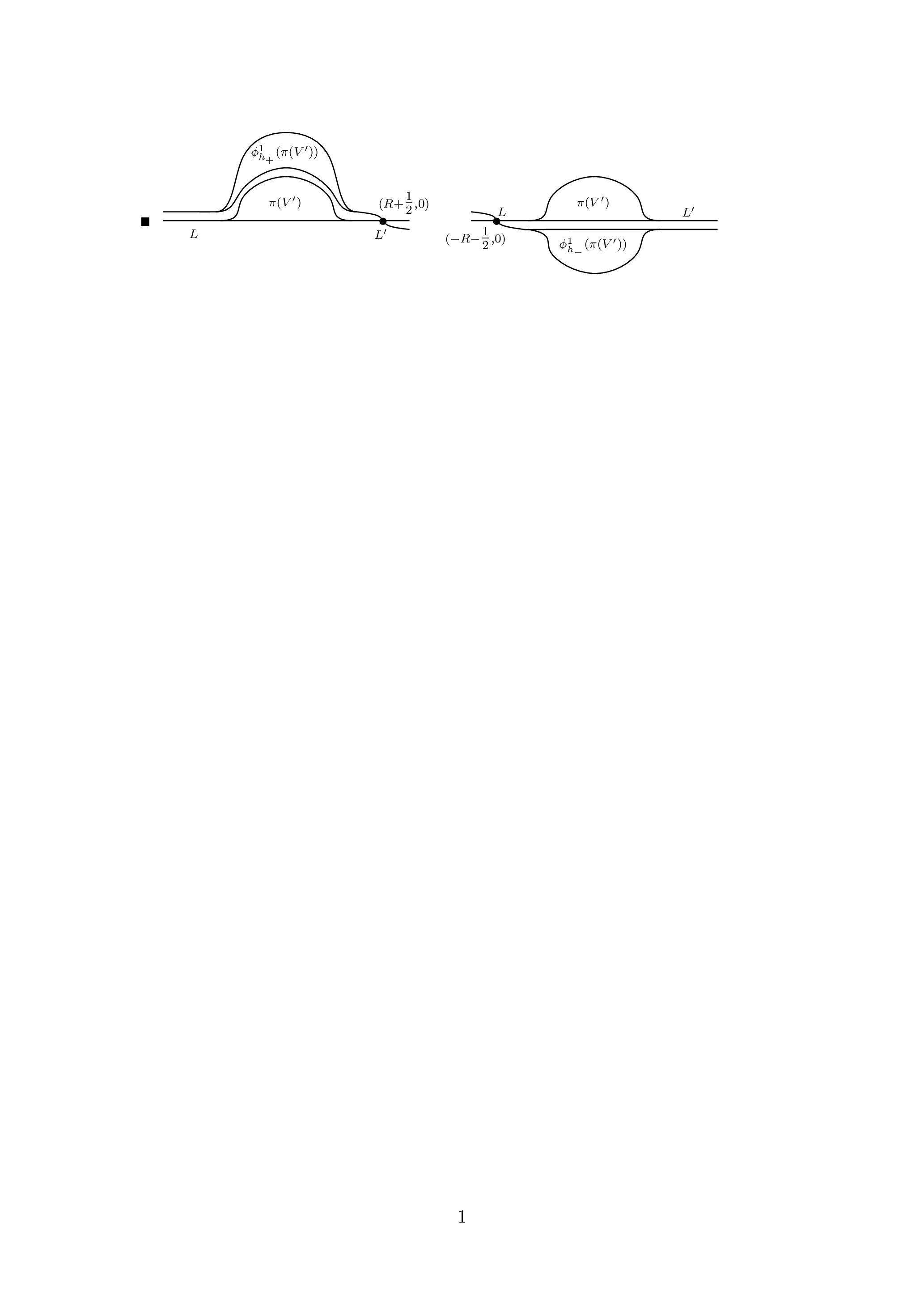}
\caption{The main point about our construction of $h_{\pm}$ is that they are elements of $\mathfrak{h}$ and their Hamiltonian flows have the following "effect": In the left picture the image of $\pi(V')$ under $\phi_{h_+}^1$ is indicated. In the right picture the image under $\phi_{h_-}^1$.}
\label{fig1}
\end{figure}

\noindent
\textit{Step 4: Relating spectral invariants of the ends to those of $V'$.}
Let $H\in C_c^{\infty}([0,1]\times M)$ be a normalized Hamiltonian which is non-degenerate both for $L$ and $L'$. Define $\phi:=\phi_H\in \widetilde{\Ham}(M,\omega)$. It suffices to prove the theorem for such $\phi$. In the notation of \cite{LeclercqZapolsky15} we have $l_L(\alpha,H)=l_L(\alpha,\phi)$ for $\alpha \in QH_*(L)$.
\begin{lemma}
	For all $\alpha \in QH_*(V',\partial V')$ it holds that
	\[
	l_{L'}(j'(\alpha),H)=l_{(V',\partial V')}(\alpha, \tilde{H}^+)-c_+ \quad \& \quad l_L(j(\alpha),H)=l_{(V',\partial V')}(\alpha, \tilde{H}^-)-c_-.
	\]
\end{lemma}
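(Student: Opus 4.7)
The plan is to prove the identity for the positive end; the negative case follows by the symmetric argument, swapping the roles of $L'$ and $L$ and of $h_+$ and $h_-$. The crucial feature of $h_+$ is that $\phi_{h_+}^1(\pi(V'))\cap \pi(V') = \{(R+\tfrac{1}{2}, 0)\}$ and $\partial_y h_+ = 0$ near $y=0$. Hence every Hamiltonian chord $\gamma$ of $\tilde{H}^+ = H + h_+$ with endpoints on $V'$ must satisfy $\pi\circ \gamma \equiv (R+\tfrac{1}{2}, 0)$. Since $V'$ is cylindrical at the positive end and coincides there with $L'$, the projection $\gamma_M := \pi_M \circ \gamma$ is a Hamiltonian chord of $H$ with endpoints on $L'$, yielding a bijection between chords of $\tilde{H}^+$ on $V'$ and chords of $H$ on $L'$. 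The local non-degeneracy of $h_+$ at $(R+\tfrac{1}{2},0)$ together with the non-degeneracy of $H$ for $L'$ ensures that $\tilde{H}^+$ is non-degenerate for $V'$.

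Next I would establish the action shift. Given a chord $\gamma$ of $\tilde{H}^+$ on $V'$, any capping $\widehat{\gamma}_M$ of $\gamma_M$ on $L'$ lifts to a capping $\widehat{\gamma}(z) := ((R+\tfrac{1}{2}, 0), \widehat{\gamma}_M(z))$ of $\gamma$ on $V'$, because the boundary lies in the $+$-slice $[R,\infty) \times \{0\} \times L'$ of $V'$. Under this lift
\[
\mathcal{A}_{\tilde{H}^+:V'}([\gamma, \widehat{\gamma}]) = c_+ + \mathcal{A}_{H:L'}([\gamma_M, \widehat{\gamma}_M]),
\]
since the $\omega_{\R^2}$-area of the constant $\pi$-projection vanishes and $\int_0^1 h_+(\pi\gamma(t))\,dt = c_+$. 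A standard argument using the uniform monotonicity of $V'$ and $L'$ (both with monotonicity constant $\tau$ and minimal Maslov number $\geq 2$) shows that this lift descends to a bijection on equivalence classes of cappings. Choosing $\tilde{J} \in \tilde{\mathcal{J}}_{\mathfrak{h}}$ generically of split form near the $+$-slice, compatible with a regular $J$ for $(H:L')$, the open mapping principle applied to $\pi\circ u$, as in the proof of Proposition \ref{propcob2}, forces every finite-energy Floer trajectory of $(\tilde{H}^+, \tilde{J})$ to project constantly to $(R+\tfrac{1}{2}, 0)$; such trajectories are precisely $(H,J)$-Floer trajectories for $L'$. Combining these facts yields a filtered chain isomorphism
\[
\Psi: CF_*(\tilde{H}^+, \tilde{J} : V') \xrightarrow{\cong} CF_*(H, J : L'), \quad \mathcal{A}_{H:L'}(\Psi x) = \mathcal{A}_{\tilde{H}^+ : V'}(x) - c_+.
\]

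Finally I would verify the compatibility $\Psi_* \circ \PSS_{V'} = \PSS_{L'} \circ j'$. The sign convention (\ref{cob7}) on $h_+$ together with the adaptation of the quantum datum $\mathcal{D} = (\tilde{f}, \tilde{\rho}, \tilde{J}')$ to the exit region (in particular the split form $\tilde{f}(t,0,p) = f^+(p) + \sigma^+(t)$ at the $+$ end, and $-\nabla^{\tilde{\rho}}\tilde{f}$ pointing outward along $\partial V'|_{[-R,R]^2}$) implies that the PSS moduli spaces on $V'$ localize on the $+$-slice, where they reduce to PSS moduli spaces for $(H, J, L')$. Under this localization the quotient chain map that defines $j'$ corresponds precisely to $\Psi_* \circ \PSS_{V'}$. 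Unwinding the definition of $l$ via $\PSS(\alpha) \in \image(\iota_*^a)$ together with the $c_+$-shift in the filtration gives
\[
l_{(V',\partial V')}(\alpha, \tilde{H}^+) = l_{L'}(j'(\alpha), H) + c_+,
\]
which rearranges to the claim. The main obstacle is this PSS compatibility: although it is geometrically natural, rigorously verifying it requires carefully comparing the pearl moduli spaces used to define $j'$ with the PSS moduli spaces on $V'$, and showing that both localize compatibly on the $+$-slice using the convention (\ref{cob7}).
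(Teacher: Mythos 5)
There is a genuine gap at the heart of your argument: the claimed ``bijection on equivalence classes of cappings'' and the resulting filtered chain isomorphism $\Psi: CF_*(\tilde{H}^+,\tilde{J}:V')\to CF_*(H,J:L')$ do not exist in general. Cappings are identified when they have equal symplectic area, and while every capping of $\gamma_M$ in $(M,L')$ lifts to a fiber capping of $\gamma$ in $(\tilde{M},V')$, a chord sitting in the fiber $\{(R+\tfrac12,0)\}\times M$ may admit cappings with boundary on $V'$ that are \emph{not} equivalent to any fiber capping: the possible area differences form the group $\tilde{\omega}(\pi_2(\tilde{M},V'))=\tau N_{V'}\Z$, which can strictly contain $\omega(\pi_2(M,L'))=\tau N_{L'}\Z$ (the end inclusion only forces $N_{V'}\mid N_{L'}$). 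So $CF_*(\tilde{H}^+,\tilde{J}:V')$ is in general strictly larger than $CF_{*-1}(H,J:L')$, monotonicity notwithstanding, and uniform monotonicity alone cannot repair this. The paper's proof is built precisely around this point: it introduces the subspace $Y_*$ spanned by generators whose capping is not equivalent to a fiber capping, obtains the vector-space splitting $CF_*(\tilde{H}^+,\tilde{J}:V')=CF_{*-1}(H,J:L')\oplus Y_*$, shows (using confinement of Floer trajectories to the fiber) that this is a splitting of chain complexes, and then works with \emph{both} the inclusion $\iota$ and the quotient $q$, each filtration-controlled as in (\ref{cob41})--(\ref{cob42}) and each fitting into a commuting diagram with $\PSS_{V'}$, $\PSS_{L'}$ and $j'$; the two directions then give the two inequalities whose combination is the stated equality. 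Your single-isomorphism shortcut skips exactly the structure that makes the argument work.

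A secondary, but still substantive, omission: the split data you propose ($\tilde{J}$ of product form near the slice, perturbation data extending $(K^+,I^+)$ by $da_+\otimes\tilde{h}_+$) is not generic, so regularity cannot be taken for granted. The paper invokes the automatic transversality results of Biran--Cornea (Corollary 4.3.2 and Lemma 4.3.1 in \cite{BiranCornea14}, following \cite{Seidel12}), and this is where the fact that $x=R+\tfrac12$ is a \emph{non-degenerate local maximum} of $x\mapsto h_+(x,0)$ is used -- not merely for non-degeneracy of the chords, as in your sketch, but to make the fiber-confined moduli spaces regular. Your localization claim for the $\PSS$ moduli spaces is in the right spirit (it matches the paper's open-mapping/bottleneck argument), but without the $Y_*$ splitting and the transversality step the proof does not close.
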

\begin{proof}
	The proofs of the two equalities are similar, so we only prove the first one. Fix once and for all a generic path of $\omega$-compatible almost complex structures $J=\{J_t\}_{t\in [0,1]}$ in $M$ so that the Floer chain complex $(CF_*(H,J:L'),d)$ is well defined. We also want to consider the Floer chain complex $CF_*(\tilde{H}^+,\tilde{J}:V')$ for a specific choice of regular $\tilde{J}$. We specify this choice below. Note that, by the construction of $h_{+}$ there is a 1-1 correspondence between chords of $X_{\tilde{H}^+}$ connecting $V'$ to itself and chords of $X_H$ connecting $L'$ to itself (see Figure \ref{fig1}). 
	We define the subspace $Y_*\subset CF_*(\tilde{H}^+,\tilde{J}:V')$ as the $\Z_2$-vector space generated by those critical points\footnote{Recall that a critical point $[\gamma,\widehat{\gamma}]$ of $\mathcal{A}_{\tilde{H}^+:V'}$ consists of a chord $\gamma:([0,1],\{0,1\})\to (\tilde{M},V')$ satisfying $\dot{\gamma}=X_{\tilde{H}^+}(\gamma)$ and an equivalence class of cappings $\widehat{\gamma}$ of $\gamma$, where we say that two cappings of $\gamma$ are equivalent if they have the same symplectic area.} $[\gamma,\widehat{\gamma}]$ of $\mathcal{A}_{\tilde{H}^+:V'}$ for which $\widehat{\gamma}$ is \emph{not} equivalent to any capping of $\gamma$ whose image is completely contained in the fiber $\{(R+\frac{1}{2},0)\}\times M$. Since any capping of a $X_H$-chord sitting inside $M$ can be viewed as a capping of the corresponding $X_{\tilde{H}^+}$-chord sitting inside $\{(R+\tfrac{1}{2},0)\}\times M$ there is a well-defined inclusion map\footnote{The fact that $\iota$ increases the degree by 1 comes from the fact that we follow the normalization convention of the Conley-Zehnder index from \cite{Zapolsky15}. We point out that this convention corresponds to assigning Maslov index 1 to the loop $\R/\Z \ni t\mapsto e^{-t\pi i}\R$ in the Lagrangian Grassmannian of $(\C,dx\wedge dy)\approx (\R^2, \omega_{\R^2})$.}
	\begin{equation}
	\label{cob27}
	\iota :CF_{*-1}(H,J:L')\hookrightarrow CF_*(\tilde{H}^+,\tilde{J}:V')
	\end{equation}
	of $\Z_2$-vector spaces. It is clear from the definition of $Y_*$ that the image of $\iota$ is a direct complement to $Y_*$. I.e. we obtain a splitting of $\Z_2$-vector spaces:
	\begin{equation}
	\label{cob26}
	CF_*(\tilde{H}^+,\tilde{J}:V')=CF_{*-1}(H,J:L')\oplus Y_*.
	\end{equation}
Now choose any $\tilde{J}=\{\tilde{J}_t\}_{t\in [0,1]}\in \tilde{\mathcal{J}}_{\mathfrak{h}}$ satisfying the condition that $\tilde{J}_t=(\phi_{h_+}^t)_*i \oplus J_t$ outside $[-R,R]^2\times M$. We claim that $\tilde{J}$ is regular for $\tilde{H}^+$. To see this, note that all finite energy Floer trajectories corresponding to the data $(\tilde{H}^+,\tilde{J})$ are completely contained in the fiber $\{(R+\tfrac{1}{2},0)\}\times M$. This is easy to see using the same trick as in the proof of Proposition \ref{propcob2} but is in fact also a simple case of the \emph{bottleneck} construction in the proof of Lemma 3.3.2 in \cite{BiranCornea14}. In particular the linearized operator associated to Floer's equation splits along any finite energy Floer trajectory. It therefore follows from the automatic transversality result in Corollary 4.3.2 from \cite{BiranCornea14}, which is based on the theory developed in \cite{Seidel12}, that $(\tilde{H}^+,\tilde{J})$ is a regular Floer datum. Here it was crucial that $x=R+\tfrac{1}{2}$ is a local maximum for $\R \ni x\mapsto h_+(x,0)$. Considering the definition of $Y_*$ it is not hard to see that this implies that (\ref{cob26}) is in fact a splitting of chain complexes. As a consequence the quotient map $q:(CH_*(\tilde{H}^+,\tilde{J}:V'),d)\to (CH_{*-1}(H,J:L'),d)$ which collapses the $Y_*$ component is a chain map satisfying 
\begin{equation}
\label{cob41}
q(CF_*^{a+c_+}(\tilde{H}^+,\tilde{J}:V'))\subset CF_{*-1}^a(H,J:L')\quad \forall \ a\in \R.	
\end{equation} 
Similarly one sees that
\begin{equation}
\label{cob42}
\iota(CF_{*-1}^a(H,J:L'))\subset CF_*^{a+c_+}(\tilde{H}^+,\tilde{J}:V') \quad \forall \ a\in \R.
\end{equation}
We now want to choose regular perturbation data for $\PSS_{V'}$ and $\PSS_{L'}$ such that the two diagrams
\begin{align*}
	\xymatrix{
		CF_*(\tilde{H}^+,\tilde{J}:V')  & CF_{*-1}(H,J:L') \ar[l]_{\iota} \\
		QC_*(\mathcal{D}: V',\partial V') \ar[r]^{j'} \ar[u]^{\PSS_{V'}} & QC_{*-1}(\mathcal{D}': L') \ar[u]_{\PSS_{L'}} 
		} \\
	\xymatrix{
		CF_*(\tilde{H}^+,\tilde{J}:V')  \ar[r]^{q} & CF_{*-1}(H,J:L') \\
		QC_*(\mathcal{D}: V',\partial V') \ar[r]^{j'} \ar[u]^{\PSS_{V'}} & QC_{*-1}(\mathcal{D}': L') \ar[u]_{\PSS_{L'}}
		}
\end{align*}
commute. Denote by $(K^+,I^+)$ a regular perturbation datum for $\PSS_{L'}:QC_*(\mathcal{D}': L')\to CF_*(H,J:L')$. By definition of $j'$ we see\footnote{Lemma 1.3 in \cite{Singer15} implies that pearly trajectories which are not completely contained in the fiber $\{(R+\tfrac{1}{2},0)\}\times M$ cannot "end in it".} that in order to accomplish commutativity of the above diagrams it suffices to extend $(K^+,I^+)$ to a regular perturbation datum $(\tilde{K}^+,\tilde{I}^+)$ such that the image of every finite energy solution $u_+\in C^{\infty}(D_+,\tilde{M})$ of (\ref{cob4}) is contained in the fiber $\{(R+\tfrac{1}{2},0)\}\times M$ and can be identified with a solution of the corresponding equation for $\PSS_{L'}:QC_*(\mathcal{D}': L')\to CF_*(H,J:L')$. Once this has been carried out the claim easily follows from (\ref{cob41}) and (\ref{cob42}).
	
We now argue how to extend $(K^+,I^+)$. First choose a $\PSS$-admissible family of almost complex structures $\{\tilde{I}^+_z\}_{z\in D_+}$ on $\tilde{M}$ satisfying the condition that 
\begin{align*}
\tilde{I}^+_z|_{(\R^2 \backslash [-R,R]^2)\times M}&=(\phi_{h_+}^{a_+(z)})_*i \oplus I^+_z\quad \forall \ z\in D_+.
\end{align*}
Now define
\begin{equation*}
\tilde{K}^+:= da_+ \otimes \tilde{h}_+ + k_+,
\end{equation*}
where $k_+ \in \Omega^1(D_+,C^{\infty}(\tilde{M}))$ is defined by $k_+(\xi)=K^+(\xi)$ for $\xi \in TD_+$. Needless to say, we here view $K^+ \in \Omega^1(D_+,C^{\infty}(\tilde{M}))$ in the obvious way. Moreover $\tilde{h}_+:=h_+\circ \pi$. We note that, using this data, it follows from the same argument as above that the image of any finite energy solutions $u_+$ of (\ref{cob4}) is contained in the fiber $\{(R+\tfrac{1}{2},0)\}\times M$. 
Hence, the linearization of the operator associated to (\ref{cob4}) along any finite energy solution is split. One last time we use Lemma 4.3.1 in \cite{BiranCornea14}, for the case $k=0$ in their terminology, to argue that the perturbation datum $(\tilde{K}^+,\tilde{I}^+)$ is regular.\footnote{Technically speaking the case $k=0$ is not contained in the statement of Lemma 4.3.1 in \cite{BiranCornea14}. However, applying the methods from \cite{Seidel12} exactly as in the proof of this lemma one sees that the conclusion of the lemma also holds for the case $k=0$.} Again it is crucial that $x=R+\tfrac{1}{2}$ is a local maximum for $x\mapsto h_+(x,0)$. This finishes the proof of the claim. 
\end{proof}

\noindent
\textit{Step 5: The estimate.}
Applying (\ref{cob11}) and making use of the lemma together with the estimates obtained in Step 3 one easily computes that, for all $\alpha \in QH_*(V',\partial V')\backslash \{0\}$, we have
\begin{align}
\label{cob28}
|l_{L}(j(\alpha),\phi)-l_{L'}(j'(\alpha),\phi)|&\leq |l_{(V',\partial V')}(\alpha,\tilde{H}^-)-l_{(V',\partial V')}(\alpha,\tilde{H}^+)| +5\tilde{\epsilon}  \\
&\leq \int_0^1 \max_{\tilde{M}}|\tilde{H}^-_t-\tilde{H}^+_t|\ dt +5\tilde{\epsilon} \nonumber \\
&=\max_{[-C,C]^2}|h_--h_+| +5\tilde{\epsilon} \nonumber \\
&\leq \mathcal{S}(V')+10\tilde{\epsilon}=\mathcal{S}(V)+10\tilde{\epsilon}. \nonumber
\end{align}
Since $\tilde{\epsilon}>0$ was arbitrary the proof is done. 

\bibliographystyle{plain}
\bibliography{BIBpub}
\end{document}